\definecolor{darkblue}{RGB}{2,58,141}
\definecolor{lightgray}{RGB}{229,235,244}
\def\@seccntDot{.}
\def\@seccntformat#1{\csname the#1\endcsname\@seccntDot\hskip 0.5em}
\renewcommand\section{\@startsection{section}{1}{\z@}%
{18\p@ \@plus 6\p@ \@minus 3\p@}%
{9\p@ \@plus 6\p@ \@minus 3\p@}%
{\large\bfseries\boldmath}}
\renewcommand\subsection{\@startsection{subsection}{2}{\z@}%
{12\p@ \@plus 6\p@ \@minus 3\p@}%
{3\p@ \@plus 6\p@ \@minus 3\p@}%
{\bfseries\boldmath}}
\renewcommand\subsubsection{\@startsection{subsubsection}{3}{\z@}%
{12\p@ \@plus 6\p@ \@minus 3\p@}%
{\p@}%
{\bfseries\boldmath}}
\theoremstyle{plain}
\newtheorem{theorem}{Theorem}[section]
\newtheorem{lemma}{Lemma}[section]
\newtheorem{corollary}{Corollary}[section]
\theoremstyle{definition}
\newtheorem{remark}{Remark}[section]
\newtheorem{claim}{Claim}[section]
\numberwithin{equation}{section}
\DeclareMathOperator{\ex}{ex}
\DeclareMathOperator{\Ex}{Ex}
\title{Spectral Tur\'an Type Problems on Cancellative Hypergraphs}
\author{Zhenyu Ni\thanks{Department of Mathematics, Hainan University, Haikou 570228, P.R. China (\texttt{995264@hainanu.edu.cn}).}
\and Lele Liu\thanks{College of Science, University of Shanghai for Science and Technology, Shanghai 200093, 
P.R. China (\texttt{ahhylau@outlook.com}). This author is supported by the National Natural Science Foundation of China (No. 12001370).}
\thanks{Corresponding author.}
\and Liying Kang\thanks{Department of Mathematics, Shanghai University, Shanghai 200444, P.R. China (\texttt{lykang@shu.edu.cn}).
This author is supported by the National Natural Science Foundation of China (Nos. 11871329, 11971298)}
}
\date{}
\begin{document}
\maketitle

\begin{center}
\begin{tcolorbox}
\begin{abstract}
Let $G$ be a cancellative $3$-uniform hypergraph in which the symmetric difference of any two edges is not 
contained in a third one. Equivalently, a $3$-uniform hypergraph $G$ is cancellative if and only if $G$ is 
$\{F_4, F_5\}$-free, where $F_4 = \{abc, abd, bcd\}$ and $F_5 = \{abc, abd, cde\}$. A classical result in 
extremal combinatorics stated that the maximum size of a cancellative hypergraph is achieved by the balanced 
complete tripartite $3$-uniform hypergraph, which was firstly proved by Bollob\'as and later by Keevash and Mubayi.
In this paper, we consider spectral extremal problems for cancellative hypergraphs. More precisely, we 
determine the maximum $p$-spectral radius of cancellative $3$-uniform hypergraphs, and characterize the 
extremal hypergraph. As a by-product, we give an alternative proof of Bollob\'as' result from spectral viewpoint.
\par\vspace{2mm}

\noindent{\bfseries Keywords:} Hypergraph; Spectral radius; Spectral Tur\'an problem.
\par\vspace{2mm}

\noindent{\bfseries AMS Classification:} 05C35; 05C50; 05C65.
\end{abstract}
\end{tcolorbox}
\end{center}

\section{Introduction}

Consider an $r$-uniform hypergraph (or $r$-graph for brevity) $G$ and a family of $r$-graphs $\mathcal{F}$. 
We say $G$ is \emph{$\mathcal{F}$-free} if $G$ does not contain any member of $\mathcal{F}$ as a subhypergraph. 
The \emph{Tur\'an number} $\ex(n, \mathcal{F})$ is the maximum number of edges of an $\mathcal{F}$-free 
hypergraph on $n$ vertices. Determining Tur\'an numbers of graphs and hypergraphs is one of the central problems 
in extremal combinatorics. For graphs, the problem was asymptotically solved for all non-bipartite graphs 
by the celebrated Erd\H os-Stone-Simonovits Theorem. By contrast with the graph case, there is comparatively 
little understanding of the hypergraph Tur\'an number. We refer the reader to the surveys 
\cite{Furedi1991,Keevash2011,Mubayi-Verstraete2016}.

In this paper we consider spectral analogues of Tur\'an type problems for $r$-graphs. For $r=2$, the picture 
is relatively complete, due in large part to a longstanding project of Nikiforov, see e.g., \cite{Nikiforov2011} 
for details. However, for $r \geq 3$ there are very few known results. In \cite{Keevash2014}, Keevash-Lenz-Mubayi 
determine the maximum $p$-spectral radius of any $3$-graph on $n$ vertices not containing the Fano plane when 
$n$ is sufficiently large. They also obtain a $p$-spectral version of the Erd\H os-Ko-Rado theorem on 
$t$-intersecting $r$-graphs. Recently, Ellingham-Lu-Wang \cite{Ellingham-Lu-Wang2022} show that the $n$-vertex 
outerplanar $3$-graph of maximum spectral radius is the unique 3-graph whose shadow graph is the join of an 
isolated vertex and the path $P_{n-1}$. Gao-Chang-Hou \cite{GaoChangHou2022} study the extremal problem for 
$K_{r+1}^+$-free $r$-graphs among linear hypergraphs, where $K_{r+1}^+$ is obtained from the complete graph $K_{r+1}$ by 
enlarging each edge of $K_{r+1}$ with $r - 2$ new vertices disjoint from $V(K_{r+1})$ such that distinct edges of $K_{r+1}$ 
are enlarged by distinct vertices.

To state our results precisely, we need some basic definitions and notations. A $3$-graph is \emph{tripartite} 
or \emph{$3$-partite} if it has a vertex partition into three parts such that every edge has exactly one vertex 
in each part. Let $T_3(n)$ be the complete $3$-partite $3$-graph on $n$ vertices with part sizes
$\lfloor n/3\rfloor$, $\lfloor (n+1)/3\rfloor$, $\lfloor (n+2)/3\rfloor$, and $t_3(n)$ be the number of edges 
of $T_3(n)$. That is, 
\[ 
t_3(n) = \Big\lfloor \frac{n}{3} \Big\rfloor \cdot \Big\lfloor \frac{n+1}{3} \Big\rfloor \cdot \Big\lfloor \frac{n+2}{3} \Big\rfloor.
\]
We call an $r$-graph $G$ \emph{cancellative} if $G$ has the property that for any edges $A$, $B$, $C$ whenever 
$A\cup B = A\cup C$, we have $B=C$. Equivalently, $G$ is cancellative if $G$ has no three distinct triples 
$A$, $B$, $C$ satisfying $B\triangle C\subset A$, where $\triangle$ is the symmetric difference. For graphs, 
the condition is equivalent to saying that $G$ is triangle-free. Moving on to $3$-graphs, we observe that 
$B\Delta C\subset A$ can only occur when $|B\cap C|=2$ for $B\neq C$. This leads us to identify the two 
non-isomorphic configurations that are forbidden in a cancellative $3$-graph: $F_4 = \{abc, abd, bcd\}$ 
and $F_5 = \{abc, abd, cde\}$.

It is well-known that the study of Tur\'an numbers dates back to Mantel's theorem, which states that 
$\ex (n, K_3) = \lfloor n^2/4\rfloor$. As an extension of the problem to hypergraphs, Katona 
conjectured, and Bollob\'as \cite{Bollobas1974} proved the following result.

\begin{theorem}[\cite{Bollobas1974}]\label{thm:edge-extremal}
A cancellative $3$-graph on $n$ vertices has at most $t_3(n)$ edges, with equality only for $T_3(n)$.
\end{theorem}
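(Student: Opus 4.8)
The plan is to induct on $n$ through the link graphs, after extracting from cancellativity two structural facts. For $v\in V(G)$ let $L_v$ be the graph on $V(G)\setminus\{v\}$ with edges $\{x,y\}$ for which $vxy\in E(G)$, and let $\partial G$ denote the shadow, i.e. the graph on $V(G)$ whose edges are the pairs contained in some edge of $G$. Tracing through the forbidden configurations gives: (i) every $L_v$ is triangle-free, since a triangle $xyz$ in $L_v$ produces three distinct edges $vxy,vxz,vyz$ of $G$ with $(vxy)\triangle(vxz)=\{y,z\}\subset vyz$, i.e. a copy of $F_4$; and (ii) whenever $abc,abd\in E(G)$, no edge of $G$ contains the pair $\{c,d\}$, because an edge $cde$ with $e\notin\{a,b\}$ would be an $F_5$ while $acd$ or $bcd$ would complete a copy of $F_4$ on $\{a,b,c,d\}$. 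Reformulated, (ii) says that for every pair $\{a,b\}$ the codegree neighbourhood $N_G(ab)=\{c:abc\in E(G)\}$ is independent in $\partial G$.

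By Mantel's theorem, (i) alone gives $e(L_v)\le\lfloor(n-1)^2/4\rfloor$ and hence $e(G)=\tfrac13\sum_v e(L_v)\le n(n-1)^2/12$; this overshoots $t_3(n)\sim n^3/27$ by a constant factor, so (ii) is indispensable. For the inductive step I would take a pair $\{u,w\}$ of maximum codegree $D$ and set $S=N_G(uw)$. Then $|S|=D$, and $S$ is independent in $\partial G$ by (ii), so every edge of $G$ meets $S$ in at most one vertex and
\[
e(G)=e\bigl(G[V\setminus S]\bigr)+\sum_{s\in S}e(L_s),
\]
where each $L_s$ lies entirely inside $V\setminus S$ and is triangle-free with maximum degree at most $D$. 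Feeding the induction hypothesis into the first term and $e(L_s)\le\min\{\lfloor(n-D)^2/4\rfloor,\,D(n-D)/2\}$ into the sum reduces the step to a one-variable polynomial inequality in $n$ and $D$, which closes when $D$ is small or large.

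The main obstacle is the middle range of $D$ (roughly $n/4\lesssim D\lesssim n/2$, which is exactly where the extremal configuration lives): there the Mantel bound $e(L_s)\le\lfloor(n-D)^2/4\rfloor$ cannot be beaten link-by-link, and it is attained when the reduction is run on $T_3(n)$ itself, whose links are balanced complete bipartite graphs. Closing this range needs a \emph{global} argument coupling all the links: the shadow-independence of (ii), applied simultaneously to every pair, stops many links from being dense at once and forces their bipartition classes to assemble into a single tripartition of $V(G)$. I would realise this either by a Lagrangian argument --- using (i) and (ii) to show the Lagrangian satisfies $\lambda(G)\le\tfrac1{27}$, then combining with the density bound $e(G)\le(\tfrac29+o(1))\binom n3$ and a stability step (a near-extremal $G$ is close to tripartite), and finally a symmetrisation/clean-up that removes the error term --- or by a local-stability refinement of the induction above. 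In either route the equality case drops out, since $T_3(n)$ is the unique configuration making (i) and (ii) tight at once.

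Finally, consistent with the theme of this paper, one can instead read the edge bound off the main $p$-spectral extremal theorem established later: for any fixed $3$-graph $H$ one has $\lambda^{(p)}(H)\to 3\,e(H)$ as $p\to\infty$, so letting $p\to\infty$ in $\lambda^{(p)}(G)\le\lambda^{(p)}(T_3(n))$ already yields $e(G)\le t_3(n)$, with the uniqueness of $T_3(n)$ supplied by the structural description obtained in the proof of that theorem.
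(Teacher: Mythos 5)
Your preliminary reductions are fine: facts (i) and (ii) do follow from $\{F_4,F_5\}$-freeness, and since $S=N_G(uw)$ is independent in $\partial G$, every edge meets $S$ at most once and each $L_s$ lives in $V\setminus S$, so the decomposition $e(G)=e(G[V\setminus S])+\sum_{s\in S}e(L_s)$ is sound. But what you have submitted is a plan, not a proof. You yourself identify the range $n/4\lesssim D\lesssim n/2$ --- exactly where $T_3(n)$ sits, with links that are balanced complete bipartite graphs --- as the place where the link-by-link Mantel bound is tight and the induction does not close, and the ``global argument'' you propose there (a Lagrangian bound $\lambda(G)\le 1/27$, a density bound, a stability step, and a clean-up, or alternatively a ``local-stability refinement'') is only named, never carried out. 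That missing step is the entire content of Bollob\'as' theorem (and of the Keevash--Mubayi proof), and the equality characterization is likewise asserted to ``drop out'' of an argument that is not supplied. So there is a genuine gap: the crucial case is acknowledged but not proved.

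The fallback in your last paragraph does not close the gap within this paper's logic. The inequality $\lambda^{(p)}(G)\le\lambda^{(p)}(T_3(n))$ for $p>3$ (Theorem~\ref{thm:p>3}) is deduced in this paper \emph{from} Theorem~\ref{thm:edge-extremal}, so letting $p\to\infty$ there to recover $e(G)\le t_3(n)$ is circular. The only spectral statement proved independently of Theorem~\ref{thm:edge-extremal} is the case $p=3$ (Theorem~\ref{thm:p-leq-3}), and, as Corollary~\ref{coro:edge-extremal} shows, testing with the all-ones vector in $\lambda^{(3)}(G)\le\lambda^{(3)}(T_3(n))=(t_3(n))^{2/3}$ yields $e(G)\le \tfrac{n}{3}(t_3(n))^{2/3}$, which equals $t_3(n)$ only when $n\equiv 0\pmod 3$ and is strictly larger otherwise; this is precisely why the paper cites Bollob\'as for the general statement instead of reproving it. To make your route work you would need either to execute the global/stability argument in the middle codegree range (essentially reconstructing the Bollob\'as or Keevash--Mubayi proof), or to prove the $p$-spectral extremal theorem for some $p$ without invoking Theorem~\ref{thm:edge-extremal} and in a form strong enough to recover the exact edge count for all residues of $n$ modulo $3$.
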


In \cite{KeevashaMubayi2004}, Keevash and Mubayi presented a new proof of Bollob\'as' result, and further 
proved a stability theorem for cancellative hypergraphs. The main result of this paper is the following 
$p$-spectral analogues of Bollob\'as' result.

\begin{theorem}\label{thm:main}
Let $p\geq 1$ and $G$ be a cancellative $3$-graph on $n$ vertices. 
\begin{enumerate}
\item[$(1)$] If $p\geq 3$, then $\lambda^{(p)}(G)\leq \lambda^{(p)}(T_3(n))$, with equality if and only if $G = T_3(n)$.
\item[$(2)$] If $p=1$, then $\lambda^{(1)}(G)=1/9$.
\end{enumerate}
\end{theorem}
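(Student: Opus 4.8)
\emph{Plan.} Recall $\lambda^{(p)}(G)=\max\{P_G(\mathbf x):\mathbf x\geq\mathbf 0,\ \|\mathbf x\|_p=1\}$ with $P_G(\mathbf x)=3\sum_{\{i,j,k\}\in E(G)}x_ix_jx_k$. The plan is to show that, among cancellative $3$-graphs on $n$ vertices, $\lambda^{(p)}$ is maximised uniquely by $T_3(n)$ when $p\geq 3$, and then read off both parts. Two elementary observations are central. (i) \emph{Every $3$-partite $3$-graph is cancellative}: in $F_4$ and in $F_5$ two edges $\{a,b,c\},\{a,b,d\}$ share two vertices, and the remaining edge contains both $c$ and $d$; but a $3$-partition putting $\{a,b,c\}$ and $\{a,b,d\}$ in rainbow position forces $c$ and $d$ into one part, so no $3$-partite $3$-graph contains $F_4$ or $F_5$. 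Thus $T_3(n)$ is cancellative, and a direct calculation — the optimal $\mathbf x$ is constant on each part, then AM--GM on the part-sums — gives
\[
\lambda^{(p)}(T_3(n))=3^{\,1-3/p}\Big(\big\lfloor\tfrac n3\big\rfloor\big\lfloor\tfrac{n+1}3\big\rfloor\big\lfloor\tfrac{n+2}3\big\rfloor\Big)^{1-1/p},
\]
which is $1/9$ when $p=1$ and strictly increasing in $n$ when $p>1$. (ii) \emph{Structural description:} $G$ is cancellative iff for every pair $\{a,b\}$ lying in an edge the codegree set $N(\{a,b\})=\{c:\{a,b,c\}\in E(G)\}$ contains no two vertices jointly covered by an edge (this just restates $\{F_4,F_5\}$-freeness). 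For part (2) the lower bound $\lambda^{(1)}(G)\geq 1/9$ is immediate — take $\mathbf x$ equal to $1/3$ on the three vertices of an edge — so both parts reduce to the upper bound $\lambda^{(p)}(G)\leq\lambda^{(p)}(T_3(n))$, which for $p=1$ says the Lagrangian of $G$ is at most $1/27$.

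\emph{Symmetrisation.} Fix a maximiser $\mathbf x\geq\mathbf 0$ for a cancellative $G$ attaining $\max\lambda^{(p)}$ on $n$ vertices. If $x_v=0$ then $\lambda^{(p)}(G)=\lambda^{(p)}(G-v)$ with $G-v$ cancellative on $n-1$ vertices, and since $T_3(n)$ is itself cancellative and $\lambda^{(p)}(T_3(\cdot))$ is strictly increasing for $p>1$, induction on $n$ rules this out; so $\mathbf x>\mathbf 0$ and the Lagrange conditions give $\sum_{\{a,b\}\in L_i}x_ax_b=\lambda^{(p)}(G)\,x_i^{p-1}$ for every $i$, where $L_i$ is the link of $i$. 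Now pick a pair $\{u,v\}$ lying in \emph{no} edge, with $x_u\geq x_v$, and form $G'$ by replacing the link of $v$ with that of $u$. Then $G'$ is still cancellative — adjoining to a cancellative $3$-graph a "twin" sharing no edge with its copy keeps it cancellative, since a copy of $F_4$ or $F_5$ using both would, after identifying the twins, give a forbidden configuration avoiding one of them, and the two-shared-vertices structure of $F_4,F_5$ prevents two of the three edges from coinciding — and $\lambda^{(p)}$ does not decrease: writing $P_G(\mathbf x)=x_uA+x_vB+x_ux_vC+D$ we have $C=0$ (no edge through $u$ and $v$) and $A\geq B$ (by the Lagrange identity and $x_u\geq x_v$), so $P_{G'}(\mathbf x)=(x_u+x_v)A+D\geq P_G(\mathbf x)$, whence $G'$ is again extremal with the same $\mathbf x$. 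Iterating, I expect to reach an extremal cancellative $G^{*}$ in which any two vertices not jointly covered by an edge have equal links; "not jointly covered by an edge" is then an equivalence relation, its classes are the parts of a $3$-partition of $G^{*}$, and $G^{*}$ is the blow-up of the quotient $3$-graph $H$, whose shadow is complete. By (ii), $H$ is then linear with every pair in exactly one edge, i.e. a Steiner triple system — with the degenerate case $H=K_3$, whose blow-ups are precisely the complete $3$-partite $3$-graphs ($H$ has no isolated vertex since $\mathbf x>\mathbf 0$).

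\emph{Finishing.} If $H=K_3$, then $G^{*}$ is complete $3$-partite with part sizes $m_1,m_2,m_3$ summing to $n$, and $\lambda^{(p)}(G^{*})=3^{1-3/p}(m_1m_2m_3)^{1-1/p}\leq\lambda^{(p)}(T_3(n))$ with equality iff the $m_i$ are balanced, i.e. $G^{*}=T_3(n)$. If $H$ is a Steiner triple system on $k\geq 7$ points, let $\mathbf y$ be the vector of part-sums; then $\lambda^{(p)}(G^{*})$ equals the maximum of $3\sum_{\{a,b,c\}\in E(H)}y_ay_by_c$ subject to $\sum_a y_a^{\,p}/m_a^{\,p-1}\leq 1$, and since a Steiner system on $k$ points has only $k(k-1)/6$ edges — far fewer than the $\sim k^3/27$ edges of the blow-up of a single edge — a convexity estimate, valid precisely because $p\geq 3$, should show this is strictly below $\lambda^{(p)}(T_3(n))$. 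Hence $G^{*}=T_3(n)$; tracing the symmetrisation back and separately excluding "balanced" bad pairs (a pair in no edge with $x_u=x_v$ but $L_u\neq L_v$, which a local edge-swap in the link should rule out for an extremal $3$-graph) would upgrade this to $G=T_3(n)$, giving part (1). For part (2) the same dichotomy suffices more cheaply: the Lagrangian is blow-up invariant, a Steiner system on $k$ points has Lagrangian $O(1/k)$, and blow-ups of a single edge have Lagrangian $\max s_1s_2s_3=1/27$, so every cancellative $G$ has Lagrangian $\leq 1/27$, i.e. $\lambda^{(1)}(G)\leq 1/9$. Finally, \autoref{thm:edge-extremal} is recovered by letting $p\to\infty$, since $\lambda^{(p)}(G)\to 3\,|E(G)|$ and $\lambda^{(p)}(T_3(n))\to 3\,t_3(n)$.

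\emph{Main obstacle.} I expect the crux to be the last comparison in the Steiner-triple-system case for $p\geq 3$: one needs a clean quantitative bound on the $p$-spectral radius of a blow-up in terms of the edge-count of the base and the exponent, strong enough to beat the balanced complete tripartite graph, and this is exactly where $p\geq 3$ must be used (for $1<p<3$ the required concavity fails, which is why that range stays open). More routine, but still needing care, are the reduction to a positive eigenvector (the induction on $n$), the termination of the symmetrisation (via a potential such as the number of link-equal pairs, always merging the lighter vertex into the heavier), and the equality analysis handling balanced bad pairs.
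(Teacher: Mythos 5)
Your overall architecture (symmetrize across non-adjacent pairs by replacing one link with the other, observe this preserves cancellativity and does not decrease $P_G(\bm{x})$, then classify the symmetrized extremal graph) is genuinely the same as the paper's: your link-replacement is exactly the paper's operation $T_u^v$, and your non-decrease computation is Lemma \ref{lem:construction}. The divergence, and the gap, is in how you finish. After symmetrization you correctly arrive at a blow-up of a cancellative $3$-graph $H$ with complete shadow, hence (single edge or) a Steiner triple system, and the entire weight of the theorem then rests on showing that for $3\le p<\infty$ the $p$-spectral radius of a blow-up of an STS on $k\ge 7$ points is \emph{strictly} smaller than $\lambda^{(p)}(T_3(n))$. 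You leave this as an asserted ``convexity estimate'' and even flag it yourself as the main obstacle. That is precisely the step where the hypothesis $p\ge 3$ must do work, and without it the proof does not close. The paper never faces this comparison: it proves at $p=3$ that the eigenvector is nearly flat ($x_{\min}>(3/4)^{2/(p-1)}x_{\max}$, Lemma \ref{lem:xmin-xmax}), converts this into degree lower bounds (Claim \ref{claim:degree}), and uses a rainbow-star/counting contradiction (Claims \ref{claim:color}--\ref{claim:emptyset}) to show every vertex lies in $U_1\cup U_2\cup U_3$, i.e.\ the quotient is forced to be $K_3$ directly --- no STS case ever arises. It then lifts from $p=3$ to all $p>3$ by the monotonicity of $\bigl(\lambda^{(p)}(G)/(3m)\bigr)^p$ together with Bollob\'as' edge theorem. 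If you want to salvage your route, you need an explicit inequality of the form $\lambda^{(p)}(\text{blow-up of }H)\le 3^{1-3/p}\,e(H)^{\,?}\,(\cdots)$ beating $3^{1-3/p}t_3(n)^{1-1/p}$ for every STS with $k\ge 7$; this is plausible (at $p=\infty$ it is the count $\frac{k-1}{6k^2}n^3<\frac{n^3}{27}$) but it is not routine and is not supplied.

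Two secondary problems. First, your $p=1$ argument contains a false statement: the Lagrangian of a Steiner triple system is \emph{not} $O(1/k)$ --- it is at least $1/27$, since it contains an edge and the Lagrangian is monotone under taking subgraphs --- so the inequality you want ($\lambda^{(1)}(G)\le 1/9$) does not follow from that claim; the paper instead restricts to the support (Frankl--R\"odl, Lemma \ref{lem:cover}), deduces linearity there, and applies Motzkin--Straus to the shadow. Moreover you apply your blow-up dichotomy to ``every cancellative $G$'' at $p=1$, but the dichotomy was only established for the symmetrized extremal graph. Second, the equality case for $p\ge 3$ (tracing the symmetrization back to show the original $G$ already equals $T_3(n)$, including your ``balanced bad pairs'') is only gestured at; the paper devotes the final paragraph of the proof of Theorem \ref{thm:p-leq-3} to exactly this and it requires a genuine argument, not just a local edge-swap remark.
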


\section{Preliminaries}
\label{sec:preliminaries}

In this section we introduce definitions and notation that will be used throughout the paper, and give some 
preliminary lemmas.

Given an $r$-graph $G = (V(G), E(G))$ and a vertex $v$ of $G$. The \emph{link} $L_G(v)$ is the $(r-1)$-graph 
consisting of all $S\subset V(G)$ with $|S| = r-1$ and $S\cup \{v\} \in E(G)$. The \emph{degree} $d_G(v)$ of 
$v$ is the size of $L_G(v)$. As usual, we denote by $N_G(v)$ the neighbor of a vertex $v$, i.e., the set formed 
by all the vertices which form an edge with $v$. In the above mentioned notation, we will skip the index $G$ 
whenever $G$ is understood from the context.

The \emph{shadow graph} of $G$, denoted by $\partial(G)$, is the graph with $V(\partial(G)) = V(G)$ and 
$E(\partial(G))$ consisting of all pairs of vertices that belong to an edge of $G$, i.e., 
$E(\partial(G)) = \{e: |e| = 2,\, e\subseteq f\ \text{for some}\ f\in E(G)\}$. For more definitions and 
notation from hypergraph theory, see e.g., \cite{Bretto2013}.

For any real number $p\geq 1$, the $p$-spectral radius was introduced by Keevash, Lenz and Mubayi \cite{Keevash2014} 
and subsequently studied by Nikiforov \cite{Nikiforov2014:analytic-methods,Nikiforov2014:extremal-problems}. 
Let $G$ be an $r$-graph of order $n$, the polynomial form of $G$ is a multi-linear function 
$P_G(\bm{x}): \mathbb{R}^n\to\mathbb{R}$ defined for any vector $\bm{x}=(x_1,x_2,\ldots,x_n)^{\mathrm{T}}\in\mathbb{R}^n$ as
\[
P_G(\bm{x})=r\sum_{\{i_1,i_2,\ldots,i_r\}\in E(G)} x_{i_1}x_{i_2}\cdots x_{i_r}.
\]
The \emph{$p$-spectral radius}\footnote{We modified the definition of $p$-spectral radius by removing 
a constant factor $(r-1)!$ from \cite{Keevash2014}, so that the $p$-spectral radius is the same as the one 
in \cite{Cooper2012} when $p=r$. This is not essential and does not affect the results at all.} of $G$ is defined as%
\begin{equation}\label{eq:definition-p-spectral-radius}
\lambda^{(p)}(G):=\max_{\|\bm{x}\|_p=1} P_G(\bm{x}),
\end{equation}
where $\|\bm{x}\|_p:=(|x_1|^p+\cdots+|x_n|^p)^{1/p}$.

For any real number $p\geq 1$, we denote by $\mathbb{S}_{p,+}^{n-1}$ the set of all nonnegative real 
vectors $\bm{x}\in\mathbb{R}^n$ with $\|\bm{x}\|_p=1$. If $\bm{x}\in\mathbb{R}^n$ is a vector with 
$\|\bm{x}\|_p=1$ such that $\lambda^{(p)}(G)=P_G(\bm{x})$, then $\bm{x}$ is called an \emph{eigenvector} 
corresponding to $\lambda^{(p)}(G)$. Note that $P_G(\bm{x})$ can always reach its maximum at some nonnegative 
vectors. By Lagrange's method, we have the \emph{eigenequations} for $\lambda^{(p)}(G)$ and 
$\bm{x}\in\mathbb{S}_{p,+}^{n-1}$ as follows:
\begin{equation}\label{eq:eigenequation}
\lambda^{(p)}(G)x_i^{p-1} = \sum_{\{i,i_2,\ldots,i_r\}\in E(G)}x_{i_2}\cdots x_{i_r}
~~\text{for}\ x_i>0.
\end{equation}

It is worth mentioning that the $p$-spectral radius $\lambda^{(p)}(G)$ shows remarkable connections with some hypergraph
invariants. For instance, $\lambda^{(1)}(G)/r$ is the Lagrangian of $G$, $\lambda^{(r)}(G)$ is the usual
spectral radius introduced by Cooper and Dutle \cite{Cooper2012}, and $\lambda^{(\infty)}(G)/r$ is the
number of edges of $G$ (see \cite[Proposition 2.10]{Nikiforov2014:analytic-methods}).


Given two vertices $u$ and $v$, we say that $u$ and $v$ are \emph{equivalent} in $G$, in writing $u \sim v$, 
if transposing $u$ and $v$ and leaving the remaining vertices intact, we get an automorphism of $G$.

\begin{lemma}[\cite{Nikiforov2014:analytic-methods}]\label{lem:equivalent}
Let $G$ be a uniform hypergraph on $n$ vertices and $u\sim v$. If $p>1$ and $\bm{x}\in\mathbb{S}_p^{n-1}$
is an eigenvector to $\lambda^{(p)}(G)$, then $x_u = x_v$.
\end{lemma}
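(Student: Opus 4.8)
The plan is to exploit the symmetry coming from $u\sim v$ together with the strict convexity of the unit ball of $\ell^p$ for $p>1$. Throughout I would take $\bm{x}$ to be a nonnegative eigenvector, which is legitimate since $P_G$ attains its maximum on $\mathbb{S}_{p,+}^{n-1}$ and the coefficient positivity used below requires $\bm{x}\geq 0$. Let $\sigma$ be the transposition of $u$ and $v$; by hypothesis $\sigma$ is an automorphism of $G$, so $P_G(\bm{x}^\sigma)=P_G(\bm{x})=\lambda^{(p)}(G)$ and $\|\bm{x}^\sigma\|_p=1$, i.e.\ $\bm{x}^\sigma$ is again an eigenvector. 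The idea is that averaging the $u$- and $v$-entries cannot decrease $P_G$ but strictly decreases the $\ell^p$-norm unless $x_u=x_v$, after which rescaling to the unit sphere contradicts maximality.

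First I would record the effect of multilinearity. Fixing every coordinate except those at $u$ and $v$, the function $P_G$ becomes $f(x_u,x_v)=\alpha x_ux_v+\beta(x_u+x_v)+\delta$ for suitable constants, because $P_G$ is linear in each variable separately; here $\alpha=r\sum_{e\ni u,v}\prod_{w\in e\setminus\{u,v\}}x_w\geq 0$ since $\bm{x}\geq 0$, and the common coefficient $\beta$ in front of $x_u$ and $x_v$ is forced by the identity $f(x_u,x_v)=f(x_v,x_u)$ coming from $u\sim v$. Let $m=(x_u+x_v)/2$ and let $\bm{z}$ be obtained from $\bm{x}$ by setting both the $u$- and $v$-entries to $m$. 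A direct computation then gives $P_G(\bm{z})-P_G(\bm{x})=f(m,m)-f(x_u,x_v)=\tfrac{\alpha}{4}(x_u-x_v)^2\geq 0$.

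Next I would compare norms. Since only the $u$- and $v$-entries change and $t\mapsto t^p$ is strictly convex on $[0,\infty)$ for $p>1$, we have $2m^p=2\bigl(\tfrac{x_u+x_v}{2}\bigr)^p\leq x_u^p+x_v^p$, with equality if and only if $x_u=x_v$; hence $\|\bm{z}\|_p\leq 1$, strictly when $x_u\neq x_v$. Suppose for contradiction that $x_u\neq x_v$. Assuming $E(G)\neq\varnothing$ so that $\lambda:=\lambda^{(p)}(G)>0$, the vector $\hat{\bm{z}}=\bm{z}/\|\bm{z}\|_p$ has unit norm and, by homogeneity of degree $r$, satisfies $P_G(\hat{\bm{z}})=\|\bm{z}\|_p^{-r}P_G(\bm{z})\geq\|\bm{z}\|_p^{-r}\lambda>\lambda$, contradicting the maximality of $\lambda$. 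Therefore $x_u=x_v$.

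The main obstacle, and the two places where the hypotheses are used essentially, is the interplay of the two monotonicities: the nonnegativity of $\bm{x}$ is exactly what makes $\alpha\geq 0$, so that symmetrizing does not decrease $P_G$, while $p>1$ is exactly what makes $t\mapsto t^p$ strictly convex, so that symmetrizing strictly decreases the norm whenever $x_u\neq x_v$. For $p=1$ the norm is only weakly convex and the argument (correctly) breaks down, which is why the hypothesis $p>1$ cannot be dropped. The only routine care needed is the excluded degenerate case $E(G)=\varnothing$, where $\lambda=0$ and the statement is vacuous.
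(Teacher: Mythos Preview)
The paper does not supply its own proof of this lemma; it is quoted from Nikiforov's \emph{Analytic methods for uniform hypergraphs} with no argument given here. Your symmetrization argument---averaging the $u$- and $v$-entries, using multilinearity and the automorphism to see that $P_G$ does not drop, and using strict convexity of $t\mapsto t^p$ for $p>1$ to see the $\ell^p$-norm strictly drops unless $x_u=x_v$---is exactly the standard proof and is essentially what Nikiforov does; it is correct as written for nonnegative eigenvectors.

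One small caveat worth flagging: the lemma as stated in the paper writes $\bm{x}\in\mathbb{S}_p^{n-1}$ rather than $\mathbb{S}_{p,+}^{n-1}$, while your argument (as you note) needs $\bm{x}\geq 0$ to guarantee $\alpha\geq 0$. Since the only application in this paper (Lemma~3.1) uses a nonnegative eigenvector, your restriction suffices for everything downstream; but if one really wants the conclusion for arbitrary eigenvectors one has to say a word more (e.g.\ pass to $|\bm{x}|$, which has the same norm and at least as large $P_G$-value, deduce $|x_u|=|x_v|$, and then argue about signs). Your handling of the degenerate edgeless case is also appropriate.
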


\section{Cancellative hypergraph of maximum $p$-spectral radius}

The aim of this section is to give a proof of Theorem \ref{thm:main}. We split it into 
Theorem \ref{thm:p-leq-3} -- Theorem \ref{thm:p=1}, which deal with $p=3$, $p>3$ and $p=1$, respectively.

\subsection{General properties on cancellative hypergraphs}

We start this subsection with a basic fact.

\begin{lemma}\label{lem:union-triangle-free}
Let $G$ be a cancellative hypergraph, and $u,v$ be adjacent vertices. Then $L(u)$ and $L(v)$ are edge-disjoint graphs.
\end{lemma}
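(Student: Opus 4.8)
The plan is to argue by contradiction, applying the defining property of a cancellative $3$-graph directly. Suppose $L(u)$ and $L(v)$ share a common edge; that is, there is a pair $\{a,b\}$ with both $\{u,a,b\}\in E(G)$ and $\{v,a,b\}\in E(G)$. Since $u$ and $v$ are adjacent, we may fix a vertex $w$ with $\{u,v,w\}\in E(G)$. The goal is to extract from $\{u,v,w\}$, $\{u,a,b\}$, $\{v,a,b\}$ a forbidden configuration.

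The first step is to clear away degeneracies. Because $\{u,a,b\}$ is a genuine edge of a $3$-graph, its three vertices are pairwise distinct, so $u\notin\{a,b\}$; applying the same reasoning to $\{v,a,b\}$ gives $v\notin\{a,b\}$, hence $\{a,b\}\cap\{u,v\}=\emptyset$. The second step is then to observe that $A:=\{u,v,w\}$, $B:=\{u,a,b\}$, $C:=\{v,a,b\}$ are three \emph{distinct} edges: $B\neq C$ since they differ in $u$ versus $v$, and $A\neq B$, $A\neq C$ since $v\in A\setminus B$ and $u\in A\setminus C$ by the first step. Finally, $B\triangle C=\{u,v\}\subseteq A$, which is precisely the situation excluded by cancellativity (equivalently, $\{A,B,C\}$ is a copy of $F_4$ when $w\in\{a,b\}$ and a copy of $F_5$ when $w\notin\{a,b\}$). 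This contradiction shows $L(u)$ and $L(v)$ have no common edge.

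The argument is short, so there is no real obstacle; the only point deserving attention is the bookkeeping in the two steps above — verifying that the three triples are genuinely pairwise distinct so that the cancellative hypothesis actually applies, and making sure no degenerate ``edge'' with a repeated vertex has been introduced. Once that is checked, the conclusion is immediate.
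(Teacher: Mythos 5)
Your proof is correct and follows essentially the same route as the paper's: take a common link edge $\{a,b\}$, an edge containing $\{u,v\}$, and observe that the three resulting edges violate cancellativity since $\{u,a,b\}\triangle\{v,a,b\}=\{u,v\}$ is contained in the third. The extra bookkeeping you do (checking the three triples are pairwise distinct) is a harmless elaboration of what the paper leaves implicit.
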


\begin{proof}
Assume by contradiction that $e\in E(L(u)) \cap E(L(v))$. Since $u$ and $v$ are adjacent in $G$, we have 
$\{u,v\}\subset e_1\in E(G)$ for some edge $e_1$. Hence, $e_2 = e\cup\{u\}$, $e_3 = e\cup\{v\}$ and $e_1$ 
are three edges of $G$ such that $e_2\Delta e_3 \subset e_1$, a contradiction.
\end{proof}

Let $G$ be a $3$-graph and $v\in V(G)$. We denote by $E_v(G)$ the collection of edges of $G$ containing $v$, 
i.e., $E_v(G) = \{e: v\in e\in E(G)\}$. For a pair of vertices $u$ and $v$ in $G$, we denote by $T_{v}^{u}(G)$ 
a new $3$-graph with $V(T_{v}^{u}(G))=V(G)$ and 
\[ 
E(T_{v}^{u}(G)) = \big(E(G)\setminus E_v(G)\big) \cup \{(e\setminus \{u\}) \cup \{v\}: e\in E_u(G)\setminus E_v(G)\}.
\]

\begin{lemma}\label{lem:transfer-u-v}
Let $G$ be a cancellative $3$-graph. Then $T_v^u(G)$ is also cancellative for any $u,v\in V(G)$.
\end{lemma}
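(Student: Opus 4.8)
The plan is to argue by contradiction: suppose $T_v^u(G)$ contains three distinct edges $A, B, C$ with $B \triangle C \subset A$, and derive a forbidden configuration back in $G$. Write $G' = T_v^u(G)$ and set $S = \{(e\setminus\{u\})\cup\{v\} : e\in E_u(G)\setminus E_v(G)\}$ for the ``shifted'' edges, so $E(G') = (E(G)\setminus E_v(G)) \cup S$. The key structural observation is that every edge of $G'$ either lies in $E(G)$ already or lies in $S$; and in the latter case it contains $v$ but not $u$, while no edge of $G'$ containing $v$ can lie in $E(G)\setminus E_v(G)$. So the forbidden triple $\{A,B,C\}$ splits into cases according to how many of $A,B,C$ are genuinely new (i.e.\ lie in $S$). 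If none of them is in $S$, then all three are edges of $G$ and we contradict that $G$ is cancellative immediately.

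The remaining cases are where at least one of $A,B,C$ is a shifted edge. I would handle them by ``pulling back'': for a shifted edge $e' = (e\setminus\{u\})\cup\{v\} \in S$ coming from $e\in E_u(G)\setminus E_v(G)$, note that $e$ is recovered by swapping $v$ back to $u$. The cleanest way to organize this is to observe that $G'$ is isomorphic-as-an-abstract-configuration to a subhypergraph-modification that differs from $G$ only around $u$ and $v$; more concretely, I expect that applying the vertex transposition $u \leftrightarrow v$ to the triple $\{A,B,C\}$ and then, where needed, replacing an edge by its $G$-preimage, produces three edges of $G$ that still form a ``symmetric difference contained in a third'' configuration — because both the transposition $u\leftrightarrow v$ and the containment relation $B\triangle C\subset A$ behave well under these operations (symmetric difference commutes with a bijection of the ground set, and containment is preserved). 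One has to be a little careful that the three pulled-back edges remain \emph{distinct}; this is where the hypotheses $e\in E_u(G)\setminus E_v(G)$ (so the shift genuinely moves the edge) and the fact that $A,B,C$ were distinct in $G'$ get used.

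The main obstacle, and the part that needs the most care, is the case analysis when two or three of $A,B,C$ are shifted edges and share the vertex $v$: then the corresponding preimages in $G$ all contain $u$, and one must check that the relation $B\triangle C\subset A$ survives the simultaneous replacement of $v$ by $u$ in whichever of $A,B,C$ are shifted — and that we don't accidentally collapse two of them. For instance, if $B,C\in S$ with preimages $B_0, C_0 \in E_u(G)$, then $v\notin B_0\triangle C_0$ but $v\notin B\triangle C$ either (both contain $v$), so $B\triangle C = B_0\triangle C_0$, and one then needs $B_0\triangle C_0 \subset A_0$ for the appropriate preimage or original $A_0$; the subtle subcase is when $A$ itself contains $v$ or $u$. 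I would enumerate these finitely many subcases explicitly — since edges have size $3$ and we are only tracking the two special vertices $u,v$, each subcase reduces to a short combinatorial check — and in every subcase exhibit three distinct edges of $G$ witnessing that $G$ is not cancellative, contradicting the hypothesis. This completes the argument.
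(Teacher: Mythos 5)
Your proposal is correct and follows essentially the same route as the paper's proof: argue by contradiction, note that shifted edges contain $v$ but not $u$ while the remaining edges avoid $v$, and pull the shifted edges back by replacing $v$ with $u$, checking that the relation $B\triangle C\subset A$ (and distinctness, via the fact that no edge of $T_v^u(G)$ contains both $u$ and $v$) survives. The subcase enumeration you defer is exactly the paper's Cases 1--3, organized by how many of the three edges contain $v$, and your sample computation $B\triangle C=B_0\triangle C_0$ matches the key step there.
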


\begin{proof}
Suppose to the contrary that there exist three edges $e_1,e_2,e_3\in T_{v}^{u}(G)$ such that $e_1\triangle e_2 \subset e_3$. 
Recalling the definition of $T_{v}^{u}(G)$, we deduce that $u$, $v$ are non-adjacent in $T_{v}^{u}(G)$, and 
$( e\cup\{u\}) \setminus\{v\}\in E(G)$ for any $e\in E_v(T_{v}^{u}(G))$. On the other hand, since $G$ is cancellative, 
we have $v\in e_1\cup e_2\cup e_3$. Denote by $\alpha$ the number of edges $e_1$, $e_2$, $e_3$ containing $v$. It 
suffices to consider the following three cases.
	
{\bfseries Case 1.} $\alpha = 3$. We have $v\in e_1\cap e_2\cap e_3$. Hence, $e_1'=\left( e_1\cup\{u\}\right) \setminus\{v\}$, 
$e_2'=\left( e_2\cup\{u\}\right) \setminus\{v\}$ and $e_3'=\left( e_3\cup\{u\}\right) \setminus\{v\}$ are three edges 
in $G$ with $e_1'\triangle e_2'\subset e_3'$. This contradicts the fact that $G$ is cancellative.
	
{\bfseries Case 2.} $\alpha = 2$. Without loss of generality, we assume $v\in (e_1\cap e_2)\setminus e_3$ or 
$v\in (e_1\cap e_3)\setminus e_2$. If $v\in (e_1\cap e_2)\setminus e_3$, then $e_3\in E(G)$. It follows that 
$e_1' = (e_1\cup\{u\}) \setminus\{v\}$, $e_2' = (e_2\cup\{u\}) \setminus\{v\}$ and $e_3$ are three edges of $G$ 
with $e_1'\triangle e_2'\subset e_3$, which is a contradiction. If $v\in (e_1\cap e_3)\setminus e_2$, then $e_2\in E(G)$.
It follows that $e_1' = (e_1\cup\{u\}) \setminus\{v\}$, $e_2$ and $e_3' = (e_3\cup\{u\}) \setminus\{v\}$ are 
three edges of $G$ with $e_1'\triangle e_2\subset e_3'$, a contradiction.
	
{\bfseries Case 3.} $\alpha = 1$. Without loss of generality, we assume $v\in e_3\setminus(e_1\cup e_2)$. Then 
$e_1\in E(G)$ and $e_2\in E(G)$. We immediately obtain that $e_1$, $e_2$ and $e_3' = (e_3\cup\{u\}) \setminus\{v\}$ 
are three edges of $G$ with $e_1\triangle e_2\subset e_3'$. This is a contradiction and proves Lemma \ref{lem:transfer-u-v}.
\end{proof}

\begin{lemma}\label{lem:complete-3-partite-p-spectral-radius}
Let $p>1$ and $G$ be a complete $3$-partite $3$-graph. Then 
\[
\lambda^{(p)} (G) = \frac{(27\cdot |E(G)|)^{1-1/p}}{9}.
\]
\end{lemma}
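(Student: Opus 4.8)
Let $G$ be the complete $3$-partite $3$-graph with parts $V_1, V_2, V_3$ of sizes $n_1, n_2, n_3$, so that $|E(G)| = n_1 n_2 n_3$. The plan is to compute $\lambda^{(p)}(G) = \max_{\|\bm{x}\|_p = 1} P_G(\bm{x})$ directly. First I would invoke Lemma 3.5 (or the symmetry of $G$ under permuting vertices within a part): since all vertices of a fixed part $V_i$ are equivalent in $G$, an eigenvector $\bm{x}$ to $\lambda^{(p)}(G)$ is constant on each part; write $x$, $y$, $z$ for the common values on $V_1$, $V_2$, $V_3$ respectively (all nonnegative, as $P_G$ attains its max on $\mathbb{S}_{p,+}^{n-1}$). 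Then the constraint becomes $n_1 x^p + n_2 y^p + n_3 z^p = 1$ and the objective becomes $P_G(\bm{x}) = 3\, n_1 n_2 n_3\, x y z$. Introducing $a = n_1 x^p$, $b = n_2 y^p$, $c = n_3 z^p$ with $a + b + c = 1$, we get
\[
P_G(\bm{x}) = 3\, n_1 n_2 n_3 \cdot \frac{a^{1/p} b^{1/p} c^{1/p}}{(n_1 n_2 n_3)^{1/p}} = 3\,(n_1 n_2 n_3)^{1-1/p}\,(abc)^{1/p}.
\]

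Next I would maximize $(abc)^{1/p}$ subject to $a+b+c=1$, $a,b,c \ge 0$; by the AM–GM inequality the maximum is attained at $a=b=c=1/3$, giving $(abc)^{1/p} = 3^{-3/p}$. Hence
\[
\lambda^{(p)}(G) = 3\,(n_1 n_2 n_3)^{1-1/p}\cdot 3^{-3/p} = 3^{1 - 3/p}\,(n_1 n_2 n_3)^{1-1/p} = \frac{\bigl(27\, n_1 n_2 n_3\bigr)^{1-1/p}}{9},
\]
since $3^{1-3/p} = 3 \cdot 3^{-3/p} = 3 \cdot (27^{-1/p}) = 27^{1-1/p}/9$ (using $3 = 27^{1}/9$). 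Substituting $|E(G)| = n_1 n_2 n_3$ yields the claimed formula.

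The one point that needs care is the reduction to part-constant vectors. I would justify it either by citing Lemma 3.5 directly — noting that for $p > 1$ any eigenvector must be constant on each equivalence class, and within a part all vertices are pairwise equivalent — or, to be self-contained, by a convexity/smoothing argument: fixing the masses $\sum_{v \in V_i} |x_v|^p$ on each part, the product-type objective $\sum x_{i_1} x_{i_2} x_{i_3}$ over edges factors as (sum over $V_1$)(sum over $V_2$)(sum over $V_3$), and for fixed $\ell_p$-mass on a part the $\ell_1$-sum $\sum_{v\in V_i} x_v$ is maximized, by the power-mean inequality, exactly when $\bm{x}$ is constant on that part. Either way the maximization then collapses to the three-variable problem above, and the rest is the elementary AM–GM computation. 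This is therefore the only genuine step; everything after it is routine algebra.
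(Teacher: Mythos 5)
Your proof is correct, and it finishes differently from the paper's. Both arguments begin the same way, reducing to vectors that are constant on each part (the paper via Lemma~\ref{lem:equivalent}, you either via that lemma or via your power-mean smoothing argument). But where the paper then writes down the three eigenequations $\lambda a_1^{p-1}=n_2n_3a_2a_3$, etc., and solves the resulting system to get $a_i=(3n_i)^{-1/p}$, you instead substitute $a=n_1x^p$, $b=n_2y^p$, $c=n_3z^p$ and maximize $(abc)^{1/p}$ on the simplex by AM--GM. Your route buys a little extra rigor at no cost: the eigenequations are only necessary conditions at a critical point (and require all $a_i>0$ to divide by them), so strictly speaking the paper's computation identifies a candidate rather than certifying the global maximum, whereas your change of variables plus AM--GM directly exhibits the maximum value. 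Your self-contained justification of the reduction to part-constant vectors --- factoring the objective as $\bigl(\sum_{V_1}x_v\bigr)\bigl(\sum_{V_2}x_v\bigr)\bigl(\sum_{V_3}x_v\bigr)$ and applying the power-mean inequality part by part --- is also a nice way to avoid importing Lemma~\ref{lem:equivalent}. The algebra checks out: $3\cdot 3^{-3/p}(n_1n_2n_3)^{1-1/p}=(27\,n_1n_2n_3)^{1-1/p}/9$.
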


\begin{proof}
Assume that $V_1$, $V_2$ and $V_3$ are the vertex classes of $G$ with $n_i: = |V_i|$ and $n_1\geq n_2\geq n_3$. 
Let $\bm{x}\in\mathbb{S}_{p,+}^{n-1}$ be an eigenvector corresponding to $\lambda^{(p)} (G)$. By 
Lemma \ref{lem:equivalent}, for $i=1,2,3$ we denote $a_i := x_v$ for $v\in V_i$, and set $\lambda := \lambda^{(p)} (G)$ 
for short. In light of eigenequation \eqref{eq:eigenequation}, we find that
\[
\begin{cases}
\lambda a_1^{p-1} = n_2n_3 a_2a_3, \\
\lambda a_2^{p-1} = n_1n_3 a_1a_3, \\
\lambda a_3^{p-1} = n_1n_2 a_1a_2,
\end{cases}
\]
from which we obtain that $a_i = (3n_i)^{-1/p}$, $i=1,2,3$. Therefore, 
\[
\lambda = \frac{(27\cdot n_1n_2n_3)^{1-1/p}}{9}
= \frac{(27\cdot |E(G)|)^{1-1/p}}{9}.
\]
This completes the proof of Lemma \ref{lem:complete-3-partite-p-spectral-radius}.
\end{proof}

\subsection{Extremal $p$-spectral radius of cancellative hypergraphs}

Let $\Ex_{sp}(n,\{F_4,F_5\})$ be the set of all $3$-graphs attaining the maximum $p$-spectral radius among 
cancellative hypergraphs on $n$ vertices. Given a vector $\bm{x}\in\mathbb{R}^n$ and a set $S\subset [n]: = \{1,2,\ldots,n\}$, 
we write $\bm{x} (S) := \prod_{i\in S} x_i$ for short. The \emph{support set} $S$ of a vector $\bm{x}$ is the index 
of non-zero elements in $\bm{x}$, i.e., $S=\{i\in [n]: x_i\neq 0\}$. Also, we denote by $x_{\min} := \min\{|x_i|: i\in [n]\}$ 
and $x_{\max} := \max\{|x_i|: i\in [n]\}$.

\begin{lemma}\label{lem:xu=xv}
Let $p > 1$, $G\in \Ex_{sp}(n,\{F_4,F_5\})$, and $\bm{x}\in\mathbb{S}_{p,+}^{n-1}$ be an eigenvector corresponding 
to $\lambda^{(p)}(G)$. If $u,v$ are two non-adjacent vertices, then $x_u = x_v$. 
\end{lemma}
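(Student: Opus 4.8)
The plan is to argue by a local modification (vertex-switching) argument, exploiting that $T_v^u(G)$ keeps cancellativity (Lemma~\ref{lem:transfer-u-v}) and that a hypergraph attaining the maximum $p$-spectral radius cannot admit a switch that strictly increases $P_G(\bm{x})$. Concretely, suppose $u,v$ are non-adjacent in $G$ and, without loss of generality, $x_u \geq x_v$. I would compare $P_G(\bm{x})$ with $P_{T_v^u(G)}(\bm{x})$: since $u$ and $v$ are non-adjacent, no edge contains both, so the edges of $G$ through $v$ get replaced by the corresponding edges through $u$ with the same ``other two'' vertices, while all edges not containing $v$ are untouched. Writing $c = \sum_{\{v,a,b\}\in E(G)} x_a x_b$ and $c' = \sum_{\{u,a,b\}\in E(G)} x_a x_b$ for the ``link sums'' (note these sums range over edges not containing the other vertex, automatically, by non-adjacency), one gets $P_{T_v^u(G)}(\bm{x}) - P_G(\bm{x}) = 3(x_u - x_v)(c' - c)$ up to the precise constant in $P_G$. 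By maximality of $\lambda^{(p)}(G)$ and $\|\bm{x}\|_p = 1$ being preserved, this difference must be $\le 0$, forcing $c' \le c$ whenever $x_u \ge x_v$.

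Next I would combine this with the eigenequations~\eqref{eq:eigenequation}. If both $x_u > 0$ and $x_v > 0$, then $\lambda x_u^{p-1} = c'$ and $\lambda x_v^{p-1} = c$, so $x_u \ge x_v$ together with $c' \le c$ and $\lambda > 0$ yields $x_u^{p-1} \le x_v^{p-1}$, hence $x_u = x_v$ (using $p>1$ so $t\mapsto t^{p-1}$ is strictly increasing on the nonnegatives). The remaining case is $x_v = 0$ (the case $x_u=0$ is symmetric or forces $x_u=x_v=0$ directly): here I need $x_u = 0$ too. For this I would perform the switch in the direction that moves mass onto the zero coordinate — i.e. consider $T_u^v(G)$, which is still cancellative — and argue that if $x_u > 0$ while $x_v = 0$, then one can strictly increase the objective. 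A clean way: the eigenvector $\bm{x}$ restricted to its support $S$ is an eigenvector of $G[S]$ with the same $p$-spectral radius; if $v\notin S$ but $u\in S$ and $u\not\sim_G v$, then in $T_u^v(G)$ the vertex $v$ has the same link as $u$ had (nonempty), and one can check $\lambda^{(p)}(T_u^v(G)) > \lambda^{(p)}(G)$ unless $\lambda^{(p)}(G)=0$, contradicting $G\in\Ex_{sp}$; the case $\lambda^{(p)}(G)=0$ means $G$ has no edges and the claim is trivial.

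I would therefore structure the proof as: (i) set up the switching identity $P_{T_v^u(G)}(\bm{x}) - P_G(\bm{x}) = 3(x_u-x_v)(c'-c)$, being careful about the constant $3$ in the definition of $P_G$ and about which edges are counted; (ii) invoke Lemma~\ref{lem:transfer-u-v} and the maximality of $\lambda^{(p)}(G)$ to deduce the sign condition; (iii) plug into the eigenequations to get $x_u = x_v$ when both are positive; (iv) handle the zero case separately via a strict-increase argument. The main obstacle I expect is step (iv): ruling out $x_u>0=x_v$ requires a genuinely strict inequality rather than just ``$\le$'', so I need either a careful perturbation (slightly increasing $x_v$ from $0$ while slightly decreasing $x_u$, and showing the first-order change in $P_G$ is positive because the link sum $c'$ at $u$ is positive while the ``cost'' vanishes to higher order) or an appeal to a monotonicity/supersaturation fact about $\lambda^{(p)}$ under adding a twin vertex. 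The identity in step (i) also needs a small sanity check that $T_v^u(G)$ has no repeated edges and that $\|\bm{x}\|_p$ is unchanged, both of which are immediate since $u,v$ are non-adjacent and the switch only relabels the coordinate index on edges through $v$.
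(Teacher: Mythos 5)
Your plan is essentially the paper's own argument: perform the switch $T_u^v$ (and its reverse $T_v^u$), use Lemma~\ref{lem:transfer-u-v} to stay inside the cancellative family, compare the values of the polynomial form via the eigenequations and the maximality of $\lambda^{(p)}(G)$, and conclude $x_u=x_v$ from the two resulting inequalities. One concrete slip in step (i): the switching identity is wrong. For $T_v^u(G)$ the edges through $v$ are deleted (contributing $-3x_vc$) and replaced by copies of the link of $u$ placed at $v$ (contributing $+3x_vc'$), so
\[
P_{T_v^u(G)}(\bm{x})-P_G(\bm{x})=3x_v\,(c'-c),
\]
not $3(x_u-x_v)(c'-c)$; a factor $x_u-x_v$ cannot arise from a one-sided switch (it would appear, with the opposite sign, if you swapped the two links simultaneously). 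The slip is harmless for the structure of the proof: maximality still forces $c'\le c$ whenever $x_v>0$, and symmetrically $c\le c'$ whenever $x_u>0$, so the eigenequations $\lambda x_u^{p-1}=c'$, $\lambda x_v^{p-1}=c$ give $x_u=x_v$ when both entries are positive, which is exactly the paper's conclusion. Your step (iv) is a genuine point in your favor rather than an obstacle: the published proof deduces ``$x_u\ge x_v$'' from $(x_v^{p-1}-x_u^{p-1})x_u\le 0$, which gives no information when $x_u=0$, so the case where exactly one of the two entries vanishes does require the separate strict-increase argument you sketch (after the switch the two vertices are non-adjacent with equal links, and redistributing their $p$-mass equally multiplies the sum of the two coordinates by $2^{1-1/p}>1$, strictly increasing the objective unless the common link sum, hence $\lambda^{(p)}(G)$, is zero). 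In short: same route as the paper, with one identity to correct and one degenerate case handled more carefully than in the original.
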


\begin{proof}
Assume $u$ and $v$ are two non-adjacent vertices in $G$. Since $G$ is a cancellative $3$-graph, we have $T_u^v(G)$ 
is also cancellative by Lemma \ref{lem:transfer-u-v}. It follows from \eqref{eq:definition-p-spectral-radius} and 
\eqref{eq:eigenequation} that	
\begin{align*}
\lambda^{(p)} (T_u^v (G))
& \geq 3\sum_{e\in E(G)} \bm{x}(e) - 3 \sum_{e\in E_u(G)} \bm{x}(e) 
+ 3 \sum_{e\in E_v(G)} \bm{x}(e\setminus \{v\}) \cdot x_u \\
& = \lambda^{(p)} (G) - 3\lambda^{(p)} (G) x_u^p + 3\lambda^{(p)}(G) x_v^{p-1} x_u \\
& = \lambda^{(p)} (G) + 3\lambda^{(p)} (G) (x_v^{p-1} - x_u^{p-1}) \cdot x_u,
\end{align*}
which yields that $x_u \geq x_v$. Likewise, we also have $x_v \geq x_u$ by considering $T_v^u(G)$.
Hence, $x_u=x_v$, completing the proof of Lemma \ref{lem:xu=xv}.
\end{proof}

\begin{lemma}\label{lem:construction}
Let $p > 1$, $G\in \Ex_{sp}(n,\{F_4,F_5\})$, and $u,v$ be two non-adjacent vertices. Then there exists
a cancellative $3$-graph $H$ such that
\begin{equation}\label{eq:equality-link}
L_H(u) = L_H(v), ~~ \lambda^{(p)}(H) = \lambda^{(p)}(G), \ \text{and}\ d_H(w) \leq d_G(w),~~ w\in V(G). 
\end{equation}
\end{lemma}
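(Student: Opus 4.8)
The idea is to start from the extremal hypergraph $G$ with its eigenvector $\bm{x}$, and use the transfer operation $T_v^u$ (or $T_u^v$) from Lemma~\ref{lem:transfer-u-v} to equalize the links of the two non-adjacent vertices $u$ and $v$ \emph{without} losing any $p$-spectral radius and without increasing any degree. By Lemma~\ref{lem:xu=xv} we already know $x_u = x_v$; call this common value $t$. If $t = 0$ the statement is trivial after a cosmetic adjustment (any isolated-in-$\bm{x}$ vertex can be given the link of the other), so assume $t > 0$. The key observation is that in the computation inside the proof of Lemma~\ref{lem:xu=xv}, with $x_u = x_v$ the displayed inequality becomes an equality chain: $\lambda^{(p)}(T_v^u(G)) \geq \lambda^{(p)}(G)$, and since $G$ is extremal and $T_v^u(G)$ is cancellative, equality holds throughout, so $T_v^u(G) \in \Ex_{sp}(n,\{F_4,F_5\})$ as well, and $\bm{x}$ remains an eigenvector for it.

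First I would set $H := T_v^u(G)$ and check the three requirements of \eqref{eq:equality-link}. For the spectral radius: $\lambda^{(p)}(H) \geq P_H(\bm{x})$, and because $x_u = x_v$ the contribution of the re-pointed edges equals their original contribution in $P_G(\bm{x})$, so $P_H(\bm{x}) = P_G(\bm{x}) = \lambda^{(p)}(G)$; combined with $H$ being cancellative and $G$ extremal this forces $\lambda^{(p)}(H) = \lambda^{(p)}(G)$. For the link equality $L_H(u) = L_H(v)$: by the definition of $T_v^u(G)$, every edge of $G$ through $u$ but not $v$ gets its $u$ replaced by $v$, while edges through both $u$ and $v$ are destroyed (they lie in $E_v(G)$, which is removed, and they are not of the form $(e\setminus\{u\})\cup\{v\}$ with $e\in E_u(G)\setminus E_v(G)$); but $u,v$ non-adjacent means there are no such two-vertex edges in the first place, so in fact $E_v(G) = \varnothing$ after the original non-adjacency is accounted for — more carefully, $T_v^u$ removes all edges through $v$ and adds, for each edge through $u$, its $v$-shifted copy, so afterward $u$ has empty link and $v$ has link equal to the old link of $u$. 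That is not yet $L_H(u) = L_H(v)$; rather it is $L_H(v) = L_G(u)$ and $L_H(u) = \varnothing$.

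So the actual construction needs one more move: after applying $T_v^u$, I would symmetrize by also copying the link onto $u$. Concretely, define $H$ to be the $3$-graph obtained from $G$ by \emph{replacing} both $L(u)$ and $L(v)$ by the common graph $L_G(u)$ (equivalently, keep all edges avoiding $\{u,v\}$, keep the edges through $u$, delete the edges through $v$, and then add for each edge $e \ni u$, $e\not\ni v$ the edge $(e\setminus\{u\})\cup\{v\}$). One checks $H$ is cancellative: it is $T_v^u(G)$ with the extra edges $\{(e\setminus\{u\})\cup\{v\}\}$... wait, this is exactly where care is needed — adding edges can create $F_4$ or $F_5$. The cleaner route is to note $H = T_v^u(T_u^v(\cdots))$ is not literally a composition, so instead I would argue directly: since $u \sim v$ would hold in $H$ by construction, and any forbidden configuration in $H$ involving the new edges can be pulled back, via the automorphism swapping $u$ and $v$, to a forbidden configuration using only edges through $u$ or edges avoiding $\{u,v\}$, all of which are present in $G$ — contradiction. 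The degree condition $d_H(w) \leq d_G(w)$ holds because for $w \notin \{u,v\}$ the edges at $w$ in $H$ are a subset of (a bijective image of a subset of) the edges at $w$ in $G$ under the $u\leftrightarrow v$ relabeling applied to the $v$-side, and $d_H(u) = d_H(v) = d_G(u) \leq \max\{d_G(u), d_G(v)\}$ — but I need it $\leq$ both, so here I should instead take the common link to be whichever of $L_G(u)$, $L_G(v)$ has the \emph{smaller} number of edges; by the eigenequation $\lambda t^{p-1} = \sum_{e \in L(u)} \bm{x}(e)$ and $x$ being positive on supports, $d_G(u)$ and $d_G(v)$ relate to $\bm x$-weighted sums, and since $x_u=x_v$ the weighted link-sums are equal, which is the leverage to show copying the lighter link does not decrease $P_H(\bm x)$.

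**Main obstacle.** The delicate point — and the step I expect to absorb most of the work — is verifying that $H$ is cancellative after we \emph{add} the mirrored edges onto the previously-empty side. Transfer operations that only delete-and-repoint (as in Lemma~\ref{lem:transfer-u-v}) are safe by the case analysis already given, but forcing $L_H(u) = L_H(v)$ means simultaneously populating both links with the \emph{same} graph, and a copy of $F_4$ or $F_5$ could in principle use edges from both $u$'s link and $v$'s link together with a base edge. The resolution is the automorphism argument: in $H$, $u \sim v$, so any embedded $F_4$ or $F_5$ can be "folded" by applying the transposition to all of its vertices lying on the $v$-side, yielding an isomorphic forbidden configuration whose edges all lie in $E(G)$ (edges through $u$, or through neither $u$ nor $v$), contradicting that $G$ is cancellative; one must check the fold is well-defined, i.e. does not collapse two distinct edges of the configuration into one, which uses that the configurations $F_4, F_5$ have all edges distinct and at most... this requires a short, careful case check but no real calculation. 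Everything else — the spectral-radius equality via $x_u = x_v$, and the degree monotonicity via choosing the lighter link — is routine given Lemmas~\ref{lem:equivalent}, \ref{lem:transfer-u-v} and \ref{lem:xu=xv}.
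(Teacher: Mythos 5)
Your final construction is the right one and coincides with the paper's, but you reach it through a misreading of the operation $T_v^u$, and that misreading manufactures the ``main obstacle'' on which you then spend an incomplete argument. Look again at the definition: $E(T_{v}^{u}(G)) = \big(E(G)\setminus E_v(G)\big) \cup \{(e\setminus \{u\}) \cup \{v\}: e\in E_u(G)\setminus E_v(G)\}$. Only the edges through $v$ are deleted; the edges through $u$ are \emph{not} touched, since (as $u,v$ are non-adjacent) all of $E_u(G)$ lies inside $E(G)\setminus E_v(G)$ and is retained. So after applying $T_v^u$ the link of $u$ is not empty --- one already has $L_{T_v^u(G)}(u)=L_G(u)=L_{T_v^u(G)}(v)$. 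Consequently the ``symmetrized'' graph you build in your second attempt (keep the edges avoiding $\{u,v\}$, keep the edges through $u$, delete the edges through $v$, add the $v$-shifted copies of the $u$-edges) is \emph{verbatim} $T_v^u(G)$, and its cancellativity is exactly Lemma \ref{lem:transfer-u-v}. The automorphism/folding argument you sketch to re-prove cancellativity is therefore unnecessary, and it is also the one step you leave genuinely unfinished (``requires a short, careful case check''); it is not as innocent as it looks, since a forbidden configuration in $H$ may mix an edge through $u$ with an edge through $v$, and one must rule out the fold identifying two of its three edges.

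Once this is repaired, your proof is the paper's proof: by Lemma \ref{lem:xu=xv} we have $x_u=x_v$, so replacing one link by the other changes $P_G(\bm{x})$ by $-3\lambda^{(p)}(G)x_u^p+3\lambda^{(p)}(G)x_v^{p-1}x_u=0$, whence $\lambda^{(p)}(T_u^v(G))\geq\lambda^{(p)}(G)$ and extremality forces equality; and your instinct to copy the \emph{lighter} link is precisely the paper's normalization $d_G(u)\geq d_G(v)$ followed by $H:=T_u^v(G)$, which gives $d_H(u)=d_H(v)=d_G(v)$. So the verdict is: correct construction, same route as the paper, but the cancellativity verification as written is a gap --- one that closes instantly by recognizing your graph as $T_u^v(G)$ and citing Lemma \ref{lem:transfer-u-v}.
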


\begin{proof}
Assume that $\bm{x}\in\mathbb{S}_{p,+}^{n-1}$ is an eigenvector corresponding to $\lambda^{(p)}(G)$. By 
Lemma \ref{lem:xu=xv}, $x_u = x_v$. Without loss of generality, we assume $d_G(u) \geq d_G(v)$. In view 
of \eqref{eq:definition-p-spectral-radius} and \eqref{eq:eigenequation}, we have
\begin{align*}
\lambda^{(p)}(T_u^v (G))
& \geq 3\sum_{e\in E(G)} \bm{x}(e) - 3 \sum_{e\in E_u(G)} \bm{x}(e) 
+ 3 \sum_{e\in E_v(G)} \bm{x}(e\setminus \{v\}) \cdot x_u \\
& = \lambda^{(p)}(G) + 3\lambda^{(p)}(G) (x_v^{p-1} - x_u^{p-1}) \cdot x_u  \\
& = \lambda^{(p)}(G).
\end{align*} 
Observe that $T_u^v (G)$ is a cancellative $3$-graph and $G\in \Ex_{sp}(n,\{F_4,F_5\})$. We immediately 
obtain that $\lambda^{(p)}(T_u^v (G)) = \lambda^{(p)}(G)$. It is straightforward to check that 
$H:= T_u^v(G)$ is a cancellative $3$-graph satisfying \eqref{eq:equality-link}, as desired.
\end{proof}

Next, we give an estimation on the entries of eigenvectors corresponding to $\lambda^{(p)}(G)$.

\begin{lemma}\label{lem:xmin-xmax}
Let $G\in \Ex_{sp}(n,\{F_4,F_5\})$ and $\bm{x}\in\mathbb{S}_{p,+}^{n-1}$ be an eigenvector corresponding 
to $\lambda^{(p)}(G)$. If $1 < p\leq 3$, then
\[ 
x_{\min} > \Big(\frac{3}{4}\Big)^{2/(p-1)} \cdot x_{\max}.
\]
\end{lemma}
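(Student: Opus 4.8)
The plan is to bound $x_{\min}$ from below in terms of $x_{\max}$ by a clever two-step transformation that never decreases the $p$-spectral radius, combined with the eigenequation. Let $\bm{x}\in\mathbb{S}_{p,+}^{n-1}$ be an eigenvector to $\lambda := \lambda^{(p)}(G)$, and pick vertices $s,t$ with $x_s = x_{\min}$ and $x_t = x_{\max}$. The key idea is to compare $G$ against the hypergraph obtained by ``cloning'' the neighborhood structure of $t$ onto $s$; more precisely, one wants to use $T_s^t(G)$ (transferring edges at $t$ onto $s$) while controlling how the $p$-norm changes. Since $T_s^t(G)$ is cancellative by Lemma \ref{lem:transfer-u-v}, and $G$ is extremal, any such transformation that produces a cancellative hypergraph with $\lambda^{(p)} \geq \lambda$ must in fact yield equality, and this forces structural constraints that translate into the desired inequality.

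First I would write the eigenequation at $s$ and at $t$: $\lambda x_s^{p-1} = \sum_{\{s,j,k\}\in E(G)} x_j x_k$ and $\lambda x_t^{p-1} = \sum_{\{t,j,k\}\in E(G)} x_j x_k$. Next I would build a competitor vector or hypergraph: replace the link of $s$ by (a copy of) the link of $t$, adjusting the value at $s$ so the $p$-norm stays $1$. Because $G$ is extremal, the resulting $P_G$-value cannot exceed $\lambda$; expanding this inequality and using the eigenequations gives a relation of the form $\lambda x_t^{p-1} \cdot (\text{something involving } x_s) \leq \lambda x_s^{p-1} \cdot (\text{something involving } x_t)$, from which one extracts a bound $x_s^{p-1} \geq c\, x_t^{p-1}$ for an explicit constant $c$. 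The constant $(3/4)^2$ appearing on the right strongly suggests that the argument uses the restriction $p \leq 3$ to bound a ratio of the type $(x_s/x_t)^{3-p}$ or to control a quantity like $x_t^p + x_s^p$ versus $2x_t^p$; the exponent $2/(p-1)$ on the outside is exactly what converts a bound on $(x_{\min}/x_{\max})^{p-1}$ into one on $x_{\min}/x_{\max}$, so the heart of the matter is proving $x_{\min}^{p-1} > (3/4)^2 \, x_{\max}^{p-1}$.

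The main obstacle I anticipate is handling the interaction between two edge sets at $s$ and $t$ that may overlap. When one transfers the link of $t$ onto $s$, edges that already contain $s$, or contain both $s$ and $t$, must be treated carefully — Lemma \ref{lem:union-triangle-free} tells us that if $s$ and $t$ are adjacent their links are edge-disjoint, but we cannot assume $s \sim t$. So the argument likely splits on whether $s$ and $t$ are adjacent: if non-adjacent, Lemma \ref{lem:xu=xv} already gives $x_s = x_t$ and the bound is trivial; if adjacent, edge-disjointness of $L(s)$ and $L(t)$ is available and the transfer is cleaner. The delicate step is then bounding the loss term $\sum_{e\in E_s(G)}\bm{x}(e\setminus\{s\})$ from above by $\lambda x_s^{p-1}$ and the gain term from below, keeping track of the degree-$2$ shadow pairs; getting the sharp constant $(3/4)^2$ will require being careful that no edge is double-counted and that the $p$-norm renormalization (which is where $p\leq 3$ enters, via convexity/concavity of $t\mapsto t^{p/(p-1)}$ or similar) is estimated in the right direction.
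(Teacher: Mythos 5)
Your opening move is the same as the paper's: with $u$ the vertex carrying $x_{\min}$ and $v$ the vertex carrying $x_{\max}$, compare $G$ with the cancellative competitor $T_u^v(G)$ (Lemma \ref{lem:transfer-u-v}) and use the extremality of $G$. However, two ingredients you flag as the crux are red herrings. There is no renormalization step: one evaluates the \emph{same} unit vector $\bm{x}$ on the new hypergraph, so $\lambda^{(p)}(T_u^v(G))\geq P_{T_u^v(G)}(\bm{x})$ comes for free, and $p\leq 3$ does not enter through convexity of $t\mapsto t^{p/(p-1)}$ or any norm adjustment. Likewise no adjacency case split is needed (and Lemma \ref{lem:union-triangle-free} is never used here): the possible overlap $E_u(G)\cap E_v(G)$ is absorbed by restricting the gain to $E_v(G)\setminus E_u(G)$ and bounding it from below via the eigenequation \eqref{eq:eigenequation}, which gives $\sum_{e\in E_v(G)\setminus E_u(G)}\bm{x}(e)\geq\lambda^{(p)}(G)\,(x_v^p-x_u^p)$.

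The genuine gap is that the quantitative heart of the lemma, exactly the part that produces the constant $(3/4)^2$ and the exponent $2/(p-1)$, is missing, and your guesses for it (a ratio of type $(x_s/x_t)^{3-p}$, or comparing $x_t^p+x_s^p$ with $2x_t^p$) are not how it works. Carrying the comparison out, with $\lambda=\lambda^{(p)}(G)$ and $r=x_u/x_v=x_{\min}/x_{\max}$,
\[
\lambda \;\geq\; \lambda^{(p)}(T_u^v(G)) \;\geq\; \lambda-3\lambda x_u^p+3\,\frac{x_u}{x_v}\sum_{e\in E_v(G)\setminus E_u(G)}\bm{x}(e)
\;\geq\; \lambda-3\lambda x_u^p+3\lambda\,\frac{x_u}{x_v}\,\bigl(x_v^p-x_u^p\bigr),
\]
which (dividing by $x_u>0$) rearranges to $x_u^{p-1}x_v+x_u^p\geq x_v^p$, i.e.\ $(1+r)\,r^{p-1}\geq 1$. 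The hypothesis $1<p\leq 3$ is used only through the elementary fact $2/(p-1)\geq 1$: if $r\leq(3/4)^{2/(p-1)}$, then $1+r\leq 7/4$ and $r^{p-1}\leq(3/4)^2$, so $(1+r)r^{p-1}\leq 63/64<1$, a contradiction; hence $r>(3/4)^{2/(p-1)}$, which is the lemma. (The paper phrases this as a contradiction: from $r\leq(3/4)^{2/(p-1)}$ it deduces $x_v^p-x_u^p>x_u^{p-1}x_v$ and then that the transfer strictly increases the $p$-spectral radius.) Without this computation your proposal is a plan with the correct transformation but no proof of the stated bound.
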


\begin{proof}
Suppose to the contrary that $x_{\min} \leq \big(\frac{3}{4}\big)^{2/(p-1)} \cdot x_{\max}$. Let $u$ and 
$v$ be two vertices such that $x_u = x_{\min}$ and $x_v = x_{\max} > 0$. Then we have 
\[ 
\left( 1 + \frac{x_u}{x_v}\right) \left(\frac{x_u}{x_v}\right)^{p-1} \leq 
\bigg( 1 + \left(\frac{3}{4}\right)^{2/(p-1)}\bigg) \left(\frac{3}{4}\right)^2 \leq \frac{7}{4}\cdot\frac{9}{16} < 1,
\]
which implies that 
\begin{equation}\label{eq:xv3-xu3}
x_v^p - x_u^p > x_u^{p-1} x_v.
\end{equation}
On the other hand, by eigenequations we have 
\begin{equation}\label{eq:Ev-Eu}
\sum_{e\in E_v(G)\setminus E_u(G)} \bm{x}(e) \geq \lambda^{(p)}(G) (x_v^p - x_u^p).
\end{equation}
Now, we consider the cancellative $3$-graph $T_u^v(G)$. In light of \eqref{eq:definition-p-spectral-radius} and 
\eqref{eq:Ev-Eu}, we have 
\begin{align*}
\lambda^{(p)}(T_u^v (G)) 
& \geq 3 \sum_{e\in E(G)} \bm{x}(e) - 3 \sum_{e\in E_u(G)} \bm{x}(e)
+ 3 \sum_{e\in E_v(G)\setminus E_u(G)} \bm{x}(e\setminus \{v\}) \cdot x_u \\
& \geq \lambda^{(p)}(G) - 3\lambda^{(p)}(G) x_u^p + 3\lambda^{p}(G) (x_v^p - x_u^p)\cdot\frac{x_u}{x_v} \\
& > \lambda^{(p)}(G) + 3\lambda^{(p)}(G) \Big(-x_u^p + x_u^{p-1} x_v\cdot\frac{x_u}{x_v}\Big) \\
& =\lambda^{(p)}(G),
\end{align*} 
where the third inequality is due to \eqref{eq:xv3-xu3}. This contradicts the fact that $G$ has maximum 
$p$-spectral radius over all cancellative hypergraphs. 
\end{proof}

Now, we are ready to give a proof of Theorem \ref{thm:main} for $p = 3$.

\begin{theorem}\label{thm:p-leq-3}
Let $G$ be a cancellative $3$-graph on $n$ vertices. Then $\lambda^{(3)} (G)\leq\lambda^{(3)}(T_3(n))$ 
with equality if and only if $G = T_3(n)$.
\end{theorem}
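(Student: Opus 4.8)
\textbf{Proof proposal for Theorem \ref{thm:p-leq-3}.}

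The plan is to take an extremal hypergraph $G \in \Ex_{sp}(n,\{F_4,F_5\})$ together with a nonnegative eigenvector $\bm{x} \in \mathbb{S}_{3,+}^{n-1}$, and show that $G$ must in fact be complete $3$-partite; once this is established, Lemma \ref{lem:complete-3-partite-p-spectral-radius} together with the fact that among complete $3$-partite $3$-graphs on $n$ vertices the number of edges is maximized (uniquely) by the balanced one $T_3(n)$ finishes the proof. The central structural claim to prove is therefore: \emph{non-adjacency is an equivalence relation on $V(G)$}, i.e.\ if $u \not\sim_{\mathrm{adj}} v$ and $v \not\sim_{\mathrm{adj}} w$ then $u \not\sim_{\mathrm{adj}} w$. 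If this holds, the vertex set partitions into pairwise-disjoint ``non-adjacency classes'', every edge meets each class in at most one vertex (two vertices in a common edge are adjacent), so $G$ is $k$-partite for some $k$; cancellativity (no $F_4$, no $F_5$) should then force $k \le 3$, and maximality of $\lambda^{(3)}$ forces $G$ to be the complete $3$-partite graph on these classes, hence $k = 3$ and the classes balanced.

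The engine for proving transitivity of non-adjacency is Lemma \ref{lem:construction}: if $u,v$ are non-adjacent we may pass to a cancellative $H$ with $L_H(u) = L_H(v)$, $\lambda^{(3)}(H) = \lambda^{(3)}(G)$, and $d_H(w) \le d_G(w)$ for all $w$. The idea is to use this to ``merge'' vertices. Concretely, I would argue that after performing such symmetrizing operations we may assume $G$ itself has the property that non-adjacency is transitive, or directly derive a contradiction from a ``path'' $u,v,w$ with $u\sim_{\mathrm{adj}} w$ but $u,v$ and $v,w$ non-adjacent: using Lemma \ref{lem:xu=xv} we get $x_u = x_v = x_w$, and then the edge-disjointness of links of adjacent vertices (Lemma \ref{lem:union-triangle-free}) plus the eigenequations at $u$, $v$, $w$ should be pushed to a contradiction with Lemma \ref{lem:xmin-xmax} — the bound $x_{\min} > (3/4)^{2/(p-1)} x_{\max}$ at $p = 3$ reads $x_{\min} > (3/4) x_{\max}$, which tightly constrains how unequal entries can be and is exactly the kind of quantitative slack needed to rule out the extra adjacency. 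An alternative, cleaner route: iterate Lemma \ref{lem:construction} over all non-adjacent pairs to reach an extremal $G^\ast$ in which every non-adjacency class is a set of mutually equivalent vertices with identical links; on such a hypergraph the ``quotient'' is a cancellative $3$-graph with no non-adjacent pair inside a class, and a short argument shows the quotient must be complete $3$-partite.

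The main obstacle I anticipate is precisely the transitivity step: Lemma \ref{lem:construction} gives one merged pair at a time and one must ensure the process terminates and does not destroy previously-arranged equalities of links, or else one must run the direct eigenvalue-inequality argument and verify that the constant $(3/4)^{2/(p-1)}$ from Lemma \ref{lem:xmin-xmax} genuinely closes the gap at $p=3$ (it is stated only for $1 < p \le 3$, which is why this theorem is separated from the $p > 3$ case). A secondary point requiring care is the uniqueness clause: after showing $G$ is complete $3$-partite with parts of sizes $n_1 \ge n_2 \ge n_3$, Lemma \ref{lem:complete-3-partite-p-spectral-radius} gives $\lambda^{(3)}(G) = (27 n_1 n_2 n_3)^{2/3}/9$, a strictly increasing function of $n_1 n_2 n_3$, and $n_1 n_2 n_3 < t_3(n)$ unless the parts are as balanced as possible — so equality forces $G = T_3(n)$, and as a by-product (the abstract's promised dividend) one recovers Theorem \ref{thm:edge-extremal} via $\lambda^{(\infty)}$, or here just by noting the edge-count comparison directly.
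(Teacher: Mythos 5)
Your overall skeleton --- symmetrize with $T_u^v$ via Lemma \ref{lem:construction} and Lemma \ref{lem:xu=xv} until non-adjacent vertices have equal links, deduce that the extremal graph is complete $3$-partite, then invoke Lemma \ref{lem:complete-3-partite-p-spectral-radius} and balancedness --- is the same as the paper's. But the step you wave through is exactly the one that carries all the weight, and the step you flag as the main obstacle is the easy one. Transitivity of non-adjacency in the symmetrized graph $G^*$ is nearly免 immediate: if $u\not\sim v$, $v\not\sim w$ but $u\sim w$, then $L(u)=L(v)=L(w)$, while Lemma \ref{lem:union-triangle-free} makes $L(u)$ and $L(w)$ edge-disjoint, forcing $L(u)=\emptyset$ and contradicting $u\sim w$. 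The genuine gap is your assertion that ``cancellativity (no $F_4$, no $F_5$) should then force $k\le 3$.'' This is false as stated: any Steiner triple system (e.g.\ the Fano plane) is linear, hence cancellative, every pair of its vertices is adjacent, so its non-adjacency classes are singletons and $k=n\ge 7$; nothing in cancellativity plus transitivity caps the number of classes at three. Ruling such configurations out requires the spectral maximality in a quantitative way, and this is precisely what the paper's Claims \ref{claim:degree}--\ref{claim:emptyset} supply: from $\lambda^{(3)}(G^*)\ge\lambda^{(3)}(T_3(n))>n(n-1)/9$ and the entry bound of Lemma \ref{lem:xmin-xmax} one gets degree lower bounds $d(u_1)>n(n-1)/9$, $d(u_2)>n(n-1)/12$, $d(v)>n(n-1)/16$ for all $v$; since links of pairwise adjacent vertices are pairwise edge-disjoint, a rainbow $K_{1,3}$ outside $U_1\cup U_2\cup U_3$ would produce seven mutually adjacent vertices whose link graphs together have more than $\binom{n}{2}$ edges, a contradiction, and a short cancellativity argument then forces $V(G^*)=U_1\cup U_2\cup U_3$. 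Your proposal contains no substitute for this counting mechanism, so the claim ``$k\le3$'' is unsupported.

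A secondary omission: the equality case. Symmetrization shows only that \emph{some} graph reachable from $G$ by switchings equals $T_3(n)$ with the same $\lambda^{(3)}$; the paper needs an extra argument (unwinding the last switching $T_u^v(G_{s-1})=T_3(n)$ and showing $L_{G_{s-1}}(u)=L_{G_{s-1}}(v)$, hence $G_{s-1}=T_3(n)$, and so on back to $G$) to conclude that the \emph{original} extremal $G$ is $T_3(n)$. Your uniqueness discussion only addresses balancedness among complete $3$-partite graphs, not this step.
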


\begin{proof}
According to Lemma \ref{lem:construction}, we assume that $G^*\in \Ex_{sp}(n,\{F_4,F_5\})$ is a $3$-graph such that 
$L_{G^*}(u) = L_{G^*} (v)$ for any non-adjacent vertices $u$ and $v$. 

Our first goal is to show $G^* = T_3(n)$ by Claim \ref{claim:degree} -- Claim \ref{claim:emptyset}.
Assume that $\bm{x}\in\mathbb{S}_{3,+}^{n-1}$ is an eigenvector corresponding to $\lambda^{(3)}(G^*)$; $u_1$ is 
a vertex in $G^*$ such that $x_{u_1} = x_{\max}$ and $u_2$ is a vertex with $x_{u_2}=\max\{x_v: v\in N_{G^*}(u_1)\}$. 
Let $U_1 := V(G^*)\setminus N_{G^*}(u_1)$ and $U_2 := V(G^*)\setminus N_{G^*}(u_2)$. Since $u_2\in V(G^*)\setminus U_1$, 
there exists a vertex $u_3$ such that $\{u_1,u_2,u_3\}\in E(G^*)$. Let $U_3=V(G^*)\setminus N_{G^*}(u_3)$. Recall 
that for any non-adjacent vertices $u$ and $v$ we have $L_{G^*}(u) = L_{G^*}(v)$. Hence, the sets $U_1$, $U_2$ 
and $U_3$ are well-defined.

\begin{claim}\label{claim:degree}
The following statements hold: 
\begin{enumerate}	
\item[$(1)$] $d_{G^*} (u_1) > n(n-1)/9$;

\item[$(2)$] $d_{G^*} (u_2) > n(n-1)/12$;

\item[$(3)$] $d_{G^*} (v) > n(n-1)/16$, $v\in V(G^*)$.
\end{enumerate}
\end{claim}

\noindent {\it Proof of Claim \ref{claim:degree}}.
Since $T_3(n)$ is a cancellative $3$-graph, it follows from Lemma \ref{lem:complete-3-partite-p-spectral-radius} that
\[ 
\lambda^{(3)}(G^*) \geq \lambda^{(3)} (T_3(n)) = \frac{\big(27\cdot t_3(n)\big)^{2/3}}{9}.
\]
By simple algebra we see 
\begin{equation}\label{eq:lower-bound-p-spectral-radius}
\lambda^{(3)} (G^*) \geq \frac{\big((n-2)(n+1)^2\big)^{2/3}}{9} > \frac{n(n-1)}{9}.
\end{equation}

(1). By eigenequation with respect to $u_1$, we have  
\[
\lambda^{(3)}(G^*) x_{u_1}^2 = \sum_{\{u_1,i,j\}\in E(G^*)} x_ix_j \leq d_{G^*}(u_1) x_{u_1}^2. 
\]
Combining with \eqref{eq:lower-bound-p-spectral-radius}, we get
\begin{equation}\label{eq:degree-u1}
d_{G^*}(u_1) \geq \lambda^{(3)} (G^*) > \frac{n(n-1)}{9}.
\end{equation}

(2). Observe that the definition of $U_1$, and $L_{G^*}(u) = L_{G^*} (v)$ for any pair $u,v\in U_1$. 
We immediately obtain that $|(e\setminus \{u_2\}) \cap U_1| \leq 1$ for each $e\in E_{u_2}(G^*)$. It 
follows from $x_{u_2}=\max\{x_v: v\in V(G^*)\setminus U_1\}$ that
\[
\lambda^{(3)} (G^*) x_{u_2}^2 = \sum_{\{u_2,i,j\}\in E(G^*)} x_ix_j \leq d_{G^*}(u_2) x_{u_1}x_{u_2},
\]
which, together with Lemma \ref{lem:xmin-xmax} for $p = 3$, gives
\begin{align*}
d_{G^*}(u_2) 
& \geq \frac{x_{u_2}}{x_{u_1}} \cdot \lambda^{(3)} (G^*) \\
& \geq \frac{3}{4} \cdot \lambda^{(3)} (G^*) \\
& > \frac{1}{12} n(n-1).
\end{align*}
The last inequality is due to \eqref{eq:lower-bound-p-spectral-radius}.

(3). Let $v$ be an arbitrary vertex in $V(G^*)$. Then 
\[
\lambda^{(3)}(G^*) x_v^2 = \sum_{\{v,i,j\}\in E(G^*)} x_i x_j \leq d_{G^*}(v) x_{u_1}^2 . 
\]
Hence, by Lemma \ref{lem:xmin-xmax} and \eqref{eq:lower-bound-p-spectral-radius} we have 
\[ 
d_{G^*}(v) \geq \Big(\frac{x_v}{x_{u_1}}\Big)^2 \cdot \lambda^{(3)}(G^*) > \frac{1}{16}n(n-1),
\]
as desired. 
\hfill $\Box$ \par\vspace{2mm}

Next, we consider the graph $H = L_{G^*}(u_1)\cup L_{G^*}(u_2)\cup L_{G^*}(u_3)$. Let $\phi: E(H) \to [3]$ 
be a mapping such that $\phi(f)=i$ if $f\in L_{G^*}(u_i)$, $i\in [3]$. By Lemma \ref{lem:union-triangle-free}, 
$\phi$ is an edge coloring of $H$. For convenience, we denote $L := V(G^*)\setminus(U_1\cup U_2\cup U_3)$.

\begin{claim}\label{claim:color}
If $L\neq \emptyset$, then there is no rainbow star $K_{1,3}$ in the induced subgraph $H[L]$ with the coloring $\phi$.
\end{claim}
	
\noindent {\it Proof of Claim \ref{claim:color}}.
Suppose to the contrary that there exist $v_1,v_2,v_3,v_4\in L$ with $\phi(v_1v_2) = 1$, $\phi(v_1v_3) = 2$ 
and $\phi(v_1v_4) = 3$. We first show that $\{v_1,v_2,v_3,v_4\}$ induced a clique in $\partial(G^*)$ by 
contradiction. Without loss of generality, we assume $v_2v_3\notin E(\partial(G^*))$. Then $L_{G^*}(v_2)=L_{G^*}(v_3)$.
Since $\phi(v_1v_2)=1$ and $\phi(v_1v_3)=2$, we have $\{u_1,v_1,v_2\}\in E(G^*)$ and $\{u_2,v_1,v_3\}\in E(G^*)$.
This implies that $e_1=\{u_1,u_2,u_3\}$,  $e_2=\{u_1,v_1,v_2\}$ and  $e_3=\{u_2,v_1,v_2\}$ are three edges 
in $G^*$ with $e_2\triangle e_3\subset e_1$, which is impossible. 
     	
On the other hand, since $L = V(G^*)\setminus(U_1\cup U_2\cup U_3)$, we have $v_iu_j\in E(\partial(G^*))$ 
for any $i\in [4]$, $j\in [3]$. Therefore, every pair of vertices in $\{v_1,v_2,v_3,v_4,u_1,u_2,u_3\}$ 
is contained in an edge of $G^*$. Consider the graph 
\[
H' := \bigg(\bigcup_{i=1}^3 L_{G^*} (u_i)\bigg) \bigcup \bigg(\bigcup_{i=1}^4 L_{G^*} (v_i)\bigg).
\]
By Claim \ref{claim:degree}, we have 
\begin{align*}
|E(H')| & =\sum_{1\le i\le 3}d_{G^*}(u_i)+\sum_{1\le j\le 4} d_{G^*}(v_j) \\
& > \left(1 + \frac{3}{4} + 5\times\frac{9}{16}\right) \cdot \frac{1}{9}n(n-1) \\
& =\frac{73}{144}n(n-1) \\
& >\binom{n}{2},
\end{align*} 
a contradiction completing the proof of Claim \ref{claim:color}. \hfill $\Box$

\begin{claim}\label{claim:emptyset}
$L = \emptyset$.  
\end{claim}

\noindent {\it Proof of Claim \ref{claim:emptyset}}.
Suppose to the contrary that $L\neq\emptyset$. For $i=1,2,3$, let $L_i$ be the set of vertices in $L$ which 
is not contained in an edge with coloring $i$. By Claim \ref{claim:color}, we have $L = L_1\cup L_2\cup L_3$. 
Without loss of generality, we assume $L_1\neq\emptyset$. Let $w$ be a vertex in $L_1$. Then there exists 
an edge $f$ in $G^*$ such that $f=\{u_1,w,w'\}$, where $w'\in U_2\cup U_3$. 
If $w'\in U_2$, then $f'=\{u_1,u_3,w'\}\in E(G^*)$. 
Since $G^*$ is cancellative, $w$ is not a neighbor of $u_3$ in $G^*$.
This implies that $w\in U_3$, a contradiction to $w\in L$.
Similarly, if $w'\in U_3$, then $w\in U_2$, which is also a contradiction.
\hfill $\Box$ \par\vspace{3mm}

Now, we continue our proof. By Claim \ref{claim:emptyset}, we immediately obtain that $G^*$ is a complete 
$3$-partite $3$-graph with vertex classes $U_1$, $U_2$ and $U_3$. Hence, $G^* = T_3(n)$ by 
Lemma \ref{lem:complete-3-partite-p-spectral-radius}. 

Finally, it is enough to show that $G = T_3(n)$ for any $G\in \Ex_{sp}(n, \{F_4,F_5\})$. 
According to Lemma \ref{lem:construction} and Claim \ref{claim:emptyset}, we can transfer $G$ to the complete 
$3$-partite $3$-graph $T_3(n)$ by a sequence of switchings $T_u^v(\,\cdot\,)$ that keeping the spectral 
radius unchanged. Let $T_1,\ldots,T_s$ be such a sequence of switchings $T_u^v(\,\cdot\,)$ which turn $G$ into $T_3(n)$. 
Consider the $3$-graphs $G = G_0,G_1,\ldots,G_s = T_3(n)$ in which $G_i$ is obtained from $G_{i-1}$ 
by applying $T_i$. Let $\bm{z}\in\mathbb{S}_{3,+}^{n-1}$ be an eigenvector corresponding to $\lambda^{(3)}(G_{s-1})$ and 
$T_u^v(G_{s-1}) = T_3(n)$, and denote
\[ 
A:= V(G_{s-1})\setminus \big(N_{G_{s-1}}(v) \cup \{u\} \cup \{v\}\big).
\]
Hence, we have $L_{G_{s-1}}(w) = L_{G_{s-1}}(v) = L_{T_3(n)}(v)$ for each $w\in A$. In what follows, we shall 
prove $L_{G_{s-1}}(u) = L_{G_{s-1}}(v)$, and therefore $G_{s-1} = T_3(n)$. If $L_{G_{s-1}}(u) \neq L_{G_{s-1}}(v)$, 
there exists an edge $e = v_1v_2\in L_{G_{s-1}}(u)\setminus L_{G_{s-1}}(v)$ since $z_u = z_v$ by Lemma \ref{lem:xu=xv}.
Let $M_1$ and $M_2$ be two subsets of $V(G_{s-1})$ such that $M_1\cup M_2 = N_{G_{s-1}}(v)$ and 
$L_{G_{s-1}}(v) = K_{|M_1|,|M_2|}$. If $\{v_1, v_2\}\subset N_{G_{s-1}}(v)$, then $\{v_1, v_2\}\subset M_1$
or $\{v_1, v_2\}\subset M_2$. It follows that there exists a vertex $w\in N_{G_{s-1}}(v)$ such that 
$f_1:= \{v,w,v_1\}\in E(G_{s-1})$ and $f_2:= \{v,w,v_2\}\in E(G_{s-1})$. However, 
$f_1\Delta f_2\subset \{u,v_1,v_2\}\in E(G_{s-1})$, a contradiction. So we obtain $\{v_1,v_2\}\cap A \neq \emptyset$. 
Without loss of generality, we assume $v_1\in A$. Then $L_{G_{s-1}}(v_1) = L_{G_{s-1}}(v)$, i.e., 
$uv_2\in L_{G_{s-1}}(v)$. Thus, $u\in N_{G_{s-1}}(v)$, a contradiction. This implies that $G_{s-1} = T_3(n)$. 
Likewise, $G_{i-1} = G_i$ for each $i\in [s-1]$, and therefore $G = T_3(n)$. This completes the proof of the theorem.
\end{proof}

According to Theorem \ref{thm:p-leq-3}, we can give an alternative proof of Bollob\'as' result for 
$n \equiv{0} \pmod{3}$.

\begin{corollary}\label{coro:edge-extremal}
Let $G$ be a cancellative $3$-graph on $n$ vertices with $n \equiv{0} \pmod{3}$. Then 
$|E(G)| \leq t_3(n)$ with equality if and only if $G = T_3(n)$. 
\end{corollary}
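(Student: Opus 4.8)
The plan is to use the elementary lower bound on the $3$-spectral radius obtained from the uniform test vector, and then feed in Theorem~\ref{thm:p-leq-3} together with Lemma~\ref{lem:complete-3-partite-p-spectral-radius}. First I would take $\bm{x} = (n^{-1/3},\dots,n^{-1/3})^{\mathrm{T}} \in \mathbb{S}_{3,+}^{n-1}$; since every edge contributes $(n^{-1/3})^3 = n^{-1}$ to the polynomial form, $P_G(\bm{x}) = 3|E(G)|\,n^{-1}$, and hence by the definition \eqref{eq:definition-p-spectral-radius} of the $3$-spectral radius, $\lambda^{(3)}(G) \geq 3|E(G)|/n$.

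Next, Theorem~\ref{thm:p-leq-3} gives $\lambda^{(3)}(G) \leq \lambda^{(3)}(T_3(n))$, and as $T_3(n)$ is complete $3$-partite, Lemma~\ref{lem:complete-3-partite-p-spectral-radius} evaluates $\lambda^{(3)}(T_3(n)) = (27\,t_3(n))^{2/3}/9$. Chaining these with the previous inequality yields $3|E(G)|/n \leq (27\,t_3(n))^{2/3}/9$, that is, $|E(G)| \leq n\,(27\,t_3(n))^{2/3}/27$. At this point the hypothesis $n \equiv 0 \pmod 3$ is what makes everything collapse: here $t_3(n) = (n/3)^3$, so $(27\,t_3(n))^{2/3} = (n^3)^{2/3} = n^2$ and the bound becomes $|E(G)| \leq n^3/27 = t_3(n)$, as required.

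For the equality characterization, if $|E(G)| = t_3(n)$ then the above chain of inequalities is tight throughout, so in particular $\lambda^{(3)}(G) = \lambda^{(3)}(T_3(n))$; the equality case of Theorem~\ref{thm:p-leq-3} then forces $G = T_3(n)$. The reverse implication is immediate, since $T_3(n)$ has exactly $t_3(n)$ edges.

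I do not anticipate a genuine obstacle: the corollary is a short deduction from Theorem~\ref{thm:p-leq-3}. The one thing deserving care is the role of the congruence hypothesis, which is precisely what forces $(27\,t_3(n))^{2/3} = n^2$; when $3 \nmid n$ one has $27\,t_3(n) < n^3$, hence $n\,(27\,t_3(n))^{2/3}/27 > t_3(n)$, and the crude passage through the uniform vector is too lossy to recover Bollob\'as' bound. This is why the corollary is stated only for $n$ divisible by $3$.
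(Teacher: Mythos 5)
Your proposal is correct and follows essentially the same route as the paper: the uniform test vector gives $\lambda^{(3)}(G)\geq 3|E(G)|/n$, Theorem~\ref{thm:p-leq-3} and Lemma~\ref{lem:complete-3-partite-p-spectral-radius} give the matching upper bound $(t_3(n))^{2/3}$, and divisibility by $3$ makes the two bounds coincide at $t_3(n)$, with the equality case handled via the equality case of Theorem~\ref{thm:p-leq-3} exactly as in the paper.
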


\begin{proof}
Denote by $\bm{z}$ the all-ones vector of dimension $n$.
In view of \eqref{eq:definition-p-spectral-radius}, we deduce that
\[
\lambda^{(3)} (G) \geq \frac{P_G(\bm{z})}{\|\bm{z}\|_3^3} = \frac{3 |E(G)|}{n}.
\]
On the other hand, by Theorem \ref{thm:p-leq-3} we have
\[
\lambda^{(3)} (G) \leq \lambda^{(3)}(T_3(n)) = (t_3(n))^{2/3}.
\]
As a consequence, 
\[
|E(G)| \leq \frac{n}{3}\cdot (t_3(n))^{2/3} = t_3(n).
\]
Equality may occur only if $\lambda^{(3)}(G) = (t_3(n))^{2/3} = \lambda^{(3)}(T_3(n))$, and therefore 
$G = T_3(n)$ by Theorem \ref{thm:p-leq-3}.
\end{proof}

Next, we will prove Theorem \ref{thm:main} for the case $p>3$ as stated in Theorem \ref{thm:p>3}.

\begin{lemma}[\cite{Nikiforov2014:analytic-methods}]\label{lem:nonincreasing}
Let $p\geq 1$ and $G$ be an $r$-graph with $m$ edges. Then the function 
\[
f_G(p) := \bigg(\frac{\lambda^{(p)} (G)}{rm}\bigg)^p
\]
is non-increasing in $p$.
\end{lemma}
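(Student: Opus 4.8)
The plan is to show that for $1 \le p \le q$ we have $f_G(p) \ge f_G(q)$, i.e.\ that raising the exponent cannot increase the normalized quantity $(\lambda^{(p)}(G)/(rm))^p$. The right engine for this is the power-mean (norm monotonicity) inequality applied to the change of variables that converts a $p$-normalized vector into a $q$-normalized one, together with the variational definition \eqref{eq:definition-p-spectral-radius} of the $p$-spectral radius.

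First I would fix $p \le q$, let $\bm{x} \in \mathbb{S}_{p,+}^{n-1}$ be a nonnegative eigenvector achieving $\lambda^{(p)}(G) = P_G(\bm{x})$, and define a rescaled vector $\bm{y}$ by setting $y_i = x_i^{p/q}$. Then $\|\bm{y}\|_q^q = \sum_i y_i^q = \sum_i x_i^p = 1$, so $\bm{y} \in \mathbb{S}_{q,+}^{n-1}$ is an admissible test vector for $\lambda^{(q)}(G)$. Since each edge term in $P_G$ is a product of $r$ coordinates, we have $\bm{y}(e) = \prod_{i \in e} x_i^{p/q} = \bm{x}(e)^{p/q}$. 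The key step is then to compare $P_G(\bm{y}) = r\sum_{e} \bm{x}(e)^{p/q}$ against $P_G(\bm{x}) = r\sum_e \bm{x}(e)$. Because $p/q \le 1$, the map $t \mapsto t^{p/q}$ is concave on $[0,\infty)$, and I would apply the power-mean inequality across the $m$ edges: writing $s_e := \bm{x}(e) \ge 0$, concavity of $t \mapsto t^{p/q}$ gives
\[
\frac{1}{m}\sum_{e} s_e^{p/q} \le \Bigl(\frac{1}{m}\sum_{e} s_e\Bigr)^{p/q},
\]
so that $\sum_e s_e^{p/q} \le m^{1-p/q}\bigl(\sum_e s_e\bigr)^{p/q}$, hence $P_G(\bm{y}) \le m^{1-p/q}\,(P_G(\bm{x})/r)^{p/q}\cdot r^{\,1}$, which I would organize as $P_G(\bm{y}) \le r\,m^{1-p/q}\,(\lambda^{(p)}(G)/r)^{p/q}$.

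Combining $\lambda^{(q)}(G) \ge P_G(\bm{y})$ with the displayed bound yields $\lambda^{(q)}(G) \le r\,m^{1-p/q}(\lambda^{(p)}(G)/r)^{p/q}$. Raising both sides to the power $q$ and dividing by $(rm)^q$, the $m$-factors combine as $m^{q - p}$ and the $r$-factors cancel to give exactly $(\lambda^{(q)}(G)/(rm))^q \le (\lambda^{(p)}(G)/(rm))^p$, that is $f_G(q) \le f_G(p)$, establishing that $f_G$ is non-increasing. I would close by noting that the claimed direction is forced precisely by the sign $p/q \le 1$ in the concavity step, and that $f_G$ is well-defined for all $p \ge 1$ since $\lambda^{(p)}(G) \ge 0$.

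The main obstacle is getting the direction of the inequality correct, which is exactly the step that went wrong before: one must apply concavity of $t \mapsto t^{p/q}$ (valid since $p \le q$) to pass from the $q$-normalized test vector back to the $p$-spectral quantity, \emph{not} convexity. A secondary point of care is the bookkeeping of the factors of $r$ and the $m^{1-p/q}$ term when raising to the $q$-th power and normalizing by $(rm)^q$; I would verify that these collapse to the clean statement $f_G(q) \le f_G(p)$ rather than leaving a residual power of $m$. No boundary or positivity subtleties arise beyond observing that $\bm{x}$ may be taken nonnegative, so $s_e \ge 0$ and all fractional powers are well-defined.
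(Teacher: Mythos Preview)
The paper does not prove this lemma; it is quoted from Nikiforov \cite{Nikiforov2014:analytic-methods}. So the comparison is with the standard argument, and your proposal contains a genuine logical gap.

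You start from a $p$-eigenvector $\bm{x}$, produce a $q$-test vector $\bm{y}$, and correctly derive via concavity that
\[
P_G(\bm{y}) \;\le\; r\,m^{1-p/q}\bigl(\lambda^{(p)}(G)/r\bigr)^{p/q}.
\]
But the variational definition only gives you $\lambda^{(q)}(G) \ge P_G(\bm{y})$, a \emph{lower} bound. From $\lambda^{(q)}(G) \ge P_G(\bm{y})$ and $P_G(\bm{y}) \le C$ you cannot conclude $\lambda^{(q)}(G) \le C$; the two inequalities point the wrong way to chain. Your sentence ``Combining $\lambda^{(q)}(G) \ge P_G(\bm{y})$ with the displayed bound yields $\lambda^{(q)}(G) \le \dots$'' is precisely where the argument collapses. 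Ironically, the closing paragraph warns about exactly this direction issue and still gets it backwards.

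The fix is to run the substitution in the other direction. Take a nonnegative eigenvector $\bm{y}\in\mathbb{S}_{q,+}^{n-1}$ for $\lambda^{(q)}(G)$, set $x_i=y_i^{q/p}$ so that $\|\bm{x}\|_p=1$, and use $\lambda^{(p)}(G)\ge P_G(\bm{x})$. Now $\bm{x}(e)=\bm{y}(e)^{q/p}$ with $q/p\ge 1$, so $t\mapsto t^{q/p}$ is \emph{convex} and Jensen gives
\[
\sum_{e}\bm{y}(e)^{q/p} \;\ge\; m^{1-q/p}\Bigl(\sum_{e}\bm{y}(e)\Bigr)^{q/p},
\]
hence $\lambda^{(p)}(G)\ge r\,m^{1-q/p}\bigl(\lambda^{(q)}(G)/r\bigr)^{q/p}$. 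Raising to the $p$th power and dividing by $(rm)^p$ now legitimately yields $f_G(p)\ge f_G(q)$.
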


\begin{theorem}\label{thm:p>3}
Let $p>3$ and $G$ be a cancellative $3$-graph on $n$ vertices. Then $\lambda^{(p)} (G)\leq\lambda^{(p)}(T_3(n))$ 
with equality if and only if $G = T_3(n)$.
\end{theorem}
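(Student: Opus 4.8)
The plan is to reduce the case $p > 3$ to the already-established case $p = 3$ (Theorem~\ref{thm:p-leq-3}) via the monotonicity of $f_G$ supplied by Lemma~\ref{lem:nonincreasing}, combined with the Tur\'an-type edge bound for cancellative $3$-graphs. Write $m := |E(G)|$; if $m = 0$ the claim is trivial, so assume $m \ge 1$. First I would apply Lemma~\ref{lem:nonincreasing} at the exponents $3$ and $p$ (with $r = 3$) to get $f_G(p) \le f_G(3)$, which rearranges to
\[
\lambda^{(p)}(G)^p \le (3m)^{p-3}\,\lambda^{(3)}(G)^3 .
\]
Next, by Theorem~\ref{thm:p-leq-3} we have $\lambda^{(3)}(G) \le \lambda^{(3)}(T_3(n))$, and Lemma~\ref{lem:complete-3-partite-p-spectral-radius} (with $|E(T_3(n))| = t_3(n)$) gives $\lambda^{(3)}(T_3(n)) = (t_3(n))^{2/3}$, hence $\lambda^{(p)}(G)^p \le (3m)^{p-3}(t_3(n))^2$. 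Finally, Bollob\'as' theorem (Theorem~\ref{thm:edge-extremal}) yields $m \le t_3(n)$, and since $p - 3 > 0$ this gives
\[
\lambda^{(p)}(G)^p \le (3\,t_3(n))^{p-3}(t_3(n))^2 = 3^{p-3}(t_3(n))^{p-1} .
\]
A direct computation from Lemma~\ref{lem:complete-3-partite-p-spectral-radius} shows $\lambda^{(p)}(T_3(n))^p = \big((27\,t_3(n))^{1-1/p}/9\big)^p = 3^{p-3}(t_3(n))^{p-1}$, so $\lambda^{(p)}(G) \le \lambda^{(p)}(T_3(n))$, as desired.

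For the equality characterization, observe that if $\lambda^{(p)}(G) = \lambda^{(p)}(T_3(n))$ then every inequality in the chain above is an equality. In particular $(3m)^{p-3}\lambda^{(3)}(G)^3 = (3m)^{p-3}\lambda^{(3)}(T_3(n))^3$, and since $(3m)^{p-3} > 0$ this forces $\lambda^{(3)}(G) = \lambda^{(3)}(T_3(n))$; by the equality case of Theorem~\ref{thm:p-leq-3} we conclude $G = T_3(n)$. The converse is immediate. Thus the equality analysis collapses to the $p = 3$ case and requires no additional argument.

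I do not expect a genuine obstacle here: all the structural work was done for $p = 3$, and Lemma~\ref{lem:nonincreasing} transports it to every larger $p$. The only point that needs care is that the chain of inequalities really does rely on the Tur\'an bound $m \le t_3(n)$; for $n \equiv 0 \pmod{3}$ this is available internally through Corollary~\ref{coro:edge-extremal}, so in that residue class the argument stays purely spectral, while for general $n$ one cites Theorem~\ref{thm:edge-extremal}. A secondary item to verify carefully is the bookkeeping of the powers of $3$ in the identity $\lambda^{(p)}(T_3(n))^p = 3^{p-3}(t_3(n))^{p-1}$, but this is routine.
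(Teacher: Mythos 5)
Your proof is correct and takes essentially the same route as the paper: Lemma~\ref{lem:nonincreasing} to pass from $p$ to $3$, the case $p=3$ (Theorem~\ref{thm:p-leq-3}) together with Lemma~\ref{lem:complete-3-partite-p-spectral-radius}, and the Tur\'an bound $m\le t_3(n)$ from Theorem~\ref{thm:edge-extremal}. The only organizational difference is that the paper argues on a maximizer $G\in\Ex_{sp}(n,\{F_4,F_5\})$, deduces $m\ge t_3(n)$ from $\lambda^{(p)}(G)\ge\lambda^{(p)}(T_3(n))$, and closes with the equality case of Theorem~\ref{thm:edge-extremal}, whereas you run a direct inequality chain for arbitrary $G$ and settle equality via the equality case of Theorem~\ref{thm:p-leq-3}; both are valid.
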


\begin{proof}
Assume that $p>3$ and $G$ is a $3$-graph in $\Ex_{sp} (n, \{F_4,F_5\})$ with $m$ edges. It is enough to show that 
$G = T_3(n)$. By Lemma \ref{lem:nonincreasing}, we have 
\[
\bigg(\frac{\lambda^{(p)} (G)}{3m}\bigg)^p \leq \bigg(\frac{\lambda^{(3)} (G)}{3m}\bigg)^3,
\]
which, together with $\lambda^{(3)} (G) \leq (t_3(n))^{2/3}$ by Theorem \ref{thm:p-leq-3}, gives
\[
\lambda^{(p)} (G) \leq (3m)^{1-3/p} \cdot (\lambda^{(3)}(G))^{3/p} \leq (3m)^{1-3/p} \cdot (t_3(n))^{2/p}. 
\]
On the other hand, we have 
\[
\lambda^{(p)} (G) \geq \lambda^{(p)} (T_3(n)) = \frac{\big(27\cdot t_3(n)\big)^{1-1/p}}{9}.
\]
We immediately obtain $m \geq t_3(n)$. The result follows from Theorem \ref{thm:edge-extremal}.
\end{proof}

Finally, we shall give a proof of Theorem \ref{thm:main} for the remaining case $p = 1$. In what follows,
we always assume that $\bm{x}\in\mathbb{S}_{1,+}^{n-1}$ is an eigenvector such that $\bm{x}$ has the minimum 
possible number of non-zero entries among all eigenvectors corresponding to $\lambda^{(1)}(G)$.
Before continuing, we need the following result.

\begin{lemma}[\cite{FrankRodl1984}]\label{lem:cover}
Let $G$ be an $r$-graph and $S$ be the support set of $\bm{x}$. Then for each pair vertices $u$ and $v$ in $S$, 
there is an edge in $G[S]$ containing both $u$ and $v$.
\end{lemma}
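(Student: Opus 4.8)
The plan is a standard smoothing argument exploiting the minimality of the support. Fix $u,v\in S$ and suppose, for contradiction, that no edge of $G[S]$ contains both $u$ and $v$. Since $P_G$ is multilinear, I would write $P_G(\bm{y}) = y_u A(\bm{y}) + y_v B(\bm{y}) + R(\bm{y})$, where $A$ collects the contributions of edges containing $u$ but not $v$, $B$ those of edges containing $v$ but not $u$, and $R$ those of all remaining edges; by construction none of $A$, $B$, $R$ depends on the coordinates $y_u,y_v$. The only edges not accounted for are those containing both $u$ and $v$; under our assumption every such edge has a vertex $w\notin S$, so $x_w=0$ and the edge contributes $0$ to $P_G(\bm{x})$, and — since $y_w$ will not be perturbed below — it keeps contributing $0$ along the perturbation.

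Next I would perturb along the segment $\bm{x}(t)$ obtained from $\bm{x}$ by replacing the $u$-coordinate with $x_u+t$ and the $v$-coordinate with $x_v-t$, leaving all other coordinates fixed. For $t\in[-x_u,x_v]$ the vector $\bm{x}(t)$ is nonnegative with $\|\bm{x}(t)\|_1=1$, and the decomposition above gives $P_G(\bm{x}(t)) = P_G(\bm{x}) + t\,(A(\bm{x})-B(\bm{x}))$, an affine function of $t$. Because $\bm{x}$ maximizes $P_G$ over the nonnegative $\ell_1$-unit sphere and $0$ lies in the interior of $[-x_u,x_v]$ (both $x_u,x_v>0$ as $u,v\in S$), the slope must vanish: $A(\bm{x})=B(\bm{x})$. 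Hence $P_G(\bm{x}(t))=\lambda^{(1)}(G)$ for all $t$ in the segment, so each endpoint of the segment is again an eigenvector corresponding to $\lambda^{(1)}(G)$.

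Taking $t=-x_u$ (or $t=x_v$), the resulting eigenvector has $u$ (resp. $v$) dropped from its support and acquires no new nonzero coordinate, so it has strictly fewer nonzero entries than $\bm{x}$. This contradicts the choice of $\bm{x}$ as an eigenvector to $\lambda^{(1)}(G)$ with the minimum possible number of nonzero entries, and therefore some edge of $G[S]$ must pass through every pair $u,v\in S$.

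The argument is short; the one point requiring care — and the only real obstacle — is the bookkeeping for edges meeting both $u$ and $v$: one must observe that each such edge is annihilated by a coordinate lying outside $S$, and that this coordinate is untouched by the $(u,v)$-perturbation, so these edges never re-enter the computation and the map $t\mapsto P_G(\bm{x}(t))$ is genuinely affine. Everything else is routine multilinearity together with the extremality of $\bm{x}$.
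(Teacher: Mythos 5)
Your argument is correct. The paper gives no proof of this lemma, citing it from Frankl and R\"odl, and your smoothing argument is precisely the standard one behind that citation: the decomposition $P_G(\bm{y}) = y_uA + y_vB + y_uy_vC + R$, the observation that $C$ vanishes identically along the perturbation because every edge through both $u$ and $v$ meets the (unperturbed) complement of $S$, the resulting affinity of $t\mapsto P_G(\bm{x}(t))$ forcing zero slope at the interior maximizer, and the contradiction with the minimality of the support at the endpoint $t=-x_u$. You correctly isolate the one delicate point, namely that the edges containing both $u$ and $v$ stay annihilated throughout the perturbation; nothing is missing.
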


\begin{theorem}\label{thm:p=1}
Let $G$ be a cancellative $3$-graph. Then $\lambda^{(1)} (G) = 1/9$. 
\end{theorem}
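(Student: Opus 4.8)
The plan is to show that $\lambda^{(1)}(G) \le 1/9$ for every cancellative $3$-graph $G$, since the matching lower bound follows immediately from Lemma \ref{lem:complete-3-partite-p-spectral-radius} applied to $T_3(3)$ (a single edge with a balanced weighting gives $\lambda^{(1)} = 1/9$), and $\lambda^{(1)}$ is monotone under adding isolated vertices. So fix $G \in \Ex_{sp}(n,\{F_4,F_5\})$ for $p=1$, let $\bm{x} \in \mathbb{S}_{1,+}^{n-1}$ be an eigenvector with the minimum number of non-zero entries, and let $S$ be its support. By Lemma \ref{lem:cover}, every pair of vertices in $S$ lies in a common edge of $G[S]$; in particular $G[S]$ has no two non-adjacent vertices, and $\lambda^{(1)}(G) = \lambda^{(1)}(G[S])$ with $\bm{x}$ restricted to $S$ being a positive eigenvector. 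So it suffices to bound $\lambda^{(1)}(H)$ for a cancellative $3$-graph $H$ on vertex set $S$ in which every pair of vertices is covered.

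The key structural step is to understand how large such an $H$ can be — or more precisely, to exploit the cancellative property directly via the polynomial form. First I would record, from Lemma \ref{lem:union-triangle-free}, that for any two (adjacent, hence any two) vertices $u,v \in S$ the link graphs $L_H(u)$ and $L_H(v)$ are edge-disjoint; summing $|E(L_H(w))| = d_H(w)$ over $w \in S$ and using that each edge of $H$ is counted three times gives $\sum_{w} d_H(w) = 3|E(H)|$, while edge-disjointness of all links forces $\sum_w d_H(w) \le \binom{|S|}{2}$, so $|E(H)| \le \frac{1}{6}\binom{|S|}{2}$. More useful is a weighted version: writing $y_w = x_w$ and $\lambda = \lambda^{(1)}(H)$, the eigenequation \eqref{eq:eigenequation} reads $\lambda = \sum_{\{w,i,j\}\in E(H)} y_i y_j$ for every $w \in S$. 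Multiplying by $y_w$ and summing over $w$ gives $\lambda = \lambda \sum_w y_w = 3\sum_{e \in E(H)} \bm{y}(e) = P_H(\bm{y})$, which is just consistency. The real input: for a fixed $w$, $\sum_{\{w,i,j\}\in E(H)} y_i y_j = \sum_{ij \in E(L_H(w))} y_i y_j \le \big(\sum_{\{i,j\} \in E(L_H(w))} (y_i+y_j)\big) \cdot (\text{something})$ — I want to bound each link contribution by a Lagrangian-type quantity on the link vertices, and then sum using edge-disjointness of the links so that the $y_i y_j$ terms over all links together sum to at most $\big(\sum_{i<j} y_i y_j\big) \le \frac{1}{2}\big(\sum_i y_i\big)^2 \cdot \frac{\text{stuff}}{\text{stuff}}$. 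Cleanly: since the edge sets $E(L_H(w))$, $w\in S$, are pairwise disjoint subsets of $\binom{S}{2}$, we get
\[
\lambda \cdot |S| = \sum_{w \in S} \sum_{ij \in E(L_H(w))} y_i y_j \le \sum_{i<j,\, i,j\in S} y_i y_j \le \frac{1}{2}\Big(\sum_{i\in S} y_i\Big)^2 = \frac{1}{2},
\]
using $\sum_{i\in S} y_i = \|\bm{y}\|_1 = 1$. This already gives $\lambda \le \frac{1}{2|S|}$, so we would be done as soon as $|S| \ge 5$ (when $1/(2\cdot 5) < 1/9$ fails — actually $1/10 < 1/9$, good); and for $|S| \ge 5$ we are finished, while $|S| = 3$ gives a single edge with $\lambda^{(1)} = 1/9$ by Lemma \ref{lem:complete-3-partite-p-spectral-radius}. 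The remaining case $|S| = 4$ must be handled by hand: a cancellative $3$-graph on $4$ covered vertices — one checks directly that it can have at most... well, $\{abc,abd,acd,bcd\}$ already contains $F_4$, so at most three edges, and any three edges on $4$ vertices that cover all pairs and avoid $F_4$ is impossible (three triples on $4$ points cover at most $9 < \binom{4}{2}\cdot$ with repetition, but distinct pairs: three triples give $9$ pair-slots, $6$ distinct pairs, so repetition forces two triples sharing two vertices, and adding that any three triples on $4$ vertices with a repeated pair $\{a,b\}$ look like $\{abc,abd,\,?\}$ and the third, covering $\{c,d\}$, is $\{cd x\}$, i.e. exactly $F_5$ or $F_4$). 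So $|S| \ne 4$, and $|S| = 3$ gives $\lambda^{(1)}(G) = 1/9$ exactly.

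The main obstacle I anticipate is getting the clean inequality $\sum_{w} \sum_{ij \in E(L_H(w))} y_i y_j \le \sum_{i<j} y_i y_j$ to actually carry the argument with the right constant — it hinges entirely on Lemma \ref{lem:union-triangle-free} (edge-disjoint links) plus the normalization, which is why the cancellative hypothesis is essential and where all the work sits; everything else is the small-case bookkeeping for $|S| \in \{3,4\}$. A secondary subtlety is making sure the lower bound $\lambda^{(1)}(G) \ge 1/9$ is available whenever $G$ has at least one edge, which is just the test vector putting weight $1/3$ on each of three vertices of an edge (so $P_G(\bm{x}) \ge 3 \cdot (1/3)^3 = 1/9$); combined with the upper bound this pins $\lambda^{(1)}(G) = 1/9$ for every cancellative $G$ with an edge, and the edgeless case is degenerate (or excluded by convention).
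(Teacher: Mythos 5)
Your proposal is correct and follows essentially the same route as the paper: restrict to the minimal support $S$, use the Frankl--R\"odl cover lemma together with cancellativity to make all links pairwise edge-disjoint, sum the $p=1$ eigenequations over $S$, and compare the resulting upper bound with the lower bound $1/9$ coming from a single edge. The only difference is cosmetic: the paper bounds $\sum_{i<j}x_ix_j$ by $\tfrac12\big(1-\tfrac{1}{|S|}\big)$ via Motzkin--Straus, which eliminates every $|S|\ge 4$ in one stroke, whereas your cruder bound $\tfrac12$ forces the separate (and correct) hand-check that no cancellative $3$-graph on four pairwise-covered vertices exists.
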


\begin{proof}
Assume that $G$ is a cancellative $3$-graph with support set $S$. Let $H:=G[S]$. By Lemma \ref{lem:cover},
for any $u,v\in S$ there is an edge in $H$ containing both $u$ and $v$. Hence, for any two edges 
for each pair of edges of $H$ has at most one common vertex by $H$ being cancellative. So the shadow graph 
of $H$ is the complete graph $K_{|S|}$. Since $H$ is cancellative, the link graphs $L_H(u)$ and $L_H(v)$ are
edge-disjoint graphs for any distinct vertices $u,v\in S$. It follows from \eqref{eq:eigenequation} that
\begin{equation}\label{eq:sum-eigenequations}
|S|\cdot \lambda^{(1)}(G) = \sum_{uv\in E(\partial (H))} x_ux_v \leq \frac{1}{2} \Big(1-\frac{1}{|S|}\Big),
\end{equation}
where the last inequality follows from Motzkin--Straus Theorem \cite{MotzkinStraus1965}. On the other hand,
set 
\[
z_v = 
\begin{cases}
1/|S|, & v\in S, \\
0, & \text{otherwise}.
\end{cases}
\]
We immediately have 
\[
\lambda^{(1)}(G) \geq 3 \sum_{e\in E(H)} \bm{z} (e) 
= \sum_{v\in V(H)} \bigg(z_v\cdot\sum_{f\in L_H(v)} \bm{z} (f)\bigg) = \frac{|S|-1}{2|S|^2},
\]
where the last inequality follows from the fact that $d_H(v)=(|S|-1)/2$ for $v\in V(H)$.
Combining with \eqref{eq:sum-eigenequations} we get 
\[
\lambda^{(1)}(G) = \frac{|S|-1}{2|S|^2}.
\]
Clearly, $(|S|-1)/|S|^2$ attains its maximum at $|S|=3$ when $|S|\geq 3$. Hence, we see $\lambda^{(1)}(G)\leq 1/9$.
Finally, noting that $\lambda^{(1)}(G)$ is at least the Lagrangian of an edge $K_3^{(3)}$, i.e.,
\[
\lambda^{(1)}(G) \geq\lambda^{(1)}(K_3^{3}) = \frac{1}{9},
\]
we obtain $\lambda^{(1)}(G)=1/9$, as desired.
\end{proof}

\begin{remark}
For an $r$-graph $G$ on $n$ vertices, it is well-known that $\lambda^{(1)}(G)/r$ is the Lagrangian 
of $G$. In \cite{YanPeng2019}, Yan and Peng present a tight upper bound on $\lambda^{(1)}(G)$ 
for $F_5$-free $3$-graphs, see \cite{YanPeng2019} for details. 
\end{remark}

\end{document}